\documentclass[11pt]{article}

\usepackage[T1]{fontenc}
\usepackage[utf8]{inputenc}
\usepackage{lmodern}
\usepackage{microtype}
\usepackage[a4paper,margin=30mm]{geometry}
\usepackage{amsmath,amssymb,amsthm,mathtools,mathrsfs,needspace,array}
\usepackage{aliascnt}
\usepackage{xcolor}
\usepackage{hyperref}
\usepackage[nameinlink,capitalise,noabbrev]{cleveref}
\usepackage{fancyhdr}
\hypersetup{
  colorlinks=true,linkcolor=blue!45!black,citecolor=green!35!black,
  urlcolor=blue!55!black,
  pdftitle={Movable Curves and Ample Normal Bundles},
  pdfsubject={Movable curves, ample normal sheaves, and smooth-curve replacements},
  pdfkeywords={movable curves, ample normal bundle, normal sheaf, birational geometry}
}
\newtheorem{theorem}{Theorem}[section]
\newaliascnt{proposition}{theorem}
\newtheorem{proposition}[proposition]{Proposition}
\aliascntresetthe{proposition}
\newaliascnt{lemma}{theorem}
\newtheorem{lemma}[lemma]{Lemma}
\aliascntresetthe{lemma}
\newaliascnt{corollary}{theorem}

\aliascntresetthe{corollary}
\theoremstyle{definition}
\newaliascnt{definition}{theorem}
\newtheorem{definition}[definition]{Definition}
\aliascntresetthe{definition}
\newaliascnt{question}{theorem}
\newtheorem{question}[question]{Question}
\aliascntresetthe{question}
\crefname{question}{Question}{Questions}
\Crefname{question}{Question}{Questions}
\newaliascnt{example}{theorem}
\newtheorem{example}[example]{Example}
\aliascntresetthe{example}
\theoremstyle{remark}
\newaliascnt{remark}{theorem}
\newtheorem{remark}[remark]{Remark}
\aliascntresetthe{remark}

\newcommand{\C}{\mathbb C}
\newcommand{\Q}{\mathbb Q}
\newcommand{\R}{\mathbb R}
\newcommand{\Z}{\mathbb Z}
\newcommand{\cO}{\mathcal O}
\newcommand{\cN}{\mathcal N}
\newcommand{\cI}{\mathcal I}
\newcommand{\cA}{\mathcal A}
\newcommand{\cG}{\mathcal G}
\newcommand{\cS}{\mathcal S}
\newcommand{\cF}{\mathcal F}
\newcommand{\Mov}{\overline{\operatorname{Mov}}_1}
\newcommand{\Eff}{\overline{\operatorname{Eff}}}
\newcommand{\Nef}{\operatorname{Nef}}
\newcommand{\Supp}{\operatorname{Supp}}
\newcommand{\Hom}{\mathcal Hom}
\newcommand{\tors}{\mathrm{tors}}
\newcommand{\Proj}{\operatorname{Proj}}
\newcommand{\CI}{\operatorname{CI}_{\mathrm{nef}}}
\newcommand{\Asm}{\cA_{\mathrm{sm}}}
\newcommand{\Bl}{\operatorname{Bl}}
\newcommand{\Pj}{\mathbb P}
\newcommand{\Psm}{\mathcal P_{\mathrm{sm}}}

\title{\bfseries Movable Curves and Ample Normal Bundles}
\author{Jingcao Wu\thanks{School of Mathematics, Shanghai University of Finance and Economics, Shanghai 200433, People's Republic of China. E-mail: \texttt{wujincao@shufe.edu.cn}.}}
\date{}

\begin{document}
\maketitle

\begin{abstract}
We prove that the movable cone of a smooth complex projective variety is the closed cone generated by possibly singular curves with ample coherent normal sheaves, answering a question of Boucksom--Demailly--P\u{a}un--Peternell. For smooth curves, we establish the analogous equality in several geometric settings and give explicit constructions on blowups.
\end{abstract}

\medskip
\noindent\textbf{2020 Mathematics Subject Classification.}
Primary 14C25; Secondary 14C20, 14E30.

\noindent\textbf{Keywords.}
Movable cone; ample normal sheaves; birational geometry.

\section{Introduction}

Boucksom--Demailly--P\u{a}un--Peternell (BDPP) identified the closed movable cone of curves with the dual of the pseudoeffective divisor cone \cite{BDPP13}. Their Theorem~2.4 also identifies it with the closed cone generated by curves with nef normal sheaves. In the discussion following Corollary~2.5, they asked whether the same equality holds for curves with ample normal bundles.

Throughout, varieties are defined over $\C$. Let $X$ be a smooth projective variety of dimension $n\ge2$, and let $\Mov(X)$ denote its closed movable curve cone. We use the normal sheaf convention of BDPP and Peternell \cite{BDPP13,Pet12Ample}, allowing reduced irreducible curves to be singular.

\begin{definition}
For a reduced irreducible curve $C\subset X$, put
\[
 \cN_{C/X}:=\Hom_{\cO_C}(\cI_C/\cI_C^2,\cO_C).
\]
A coherent sheaf $\cF$ on a projective curve is called ample if $\cO_{\mathbb P_C(\cF)}(1)$ is ample, where $\mathbb P$ parametrizes one-dimensional quotients. Define
\[
\cA_{\mathrm{sh}}(X):=\overline{\operatorname{cone}}\{[C]: C\subset X\text{ reduced irreducible},\ \cN_{C/X}\text{ ample}\}.
\]
\end{definition}
For smooth $C$, this sheaf is the usual normal bundle $N_{C/X}$. Our first theorem establishes BDPP's expected equality.

\begin{theorem}\label{thm:sheaf-cone}
For every smooth complex projective variety $X$,
\begin{equation}
\cA_{\mathrm{sh}}(X)=\Mov(X).
\end{equation}
\end{theorem}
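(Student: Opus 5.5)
The inclusion $\cA_{\mathrm{sh}}(X)\subseteq\Mov(X)$ is immediate from BDPP. An ample coherent sheaf on a curve is nef. Hence every generator of $\cA_{\mathrm{sh}}(X)$ lies in the closed cone generated by curves with nef normal sheaves, and \cite[Theorem~2.4]{BDPP13} identifies that cone with $\Mov(X)$. The content of the theorem is therefore the reverse inclusion $\Mov(X)\subseteq\cA_{\mathrm{sh}}(X)$. Since $\cA_{\mathrm{sh}}(X)$ is a closed convex cone, it suffices to show that it contains a dense set of classes in $\Mov(X)$.

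For the reverse inclusion I will use the BDPP description of $\Mov(X)$ as the closure of the cone generated by strongly movable classes $\mu_*(A_1\cdots A_{n-1})$. Here $\mu:\tilde X\to X$ is a modification and the $A_i$ are ample divisors on $\tilde X$. By density and homogeneity I may take the $A_i$ very ample. I may also perturb them inside the ample cone, so that rational multiples of the nearby classes still fill an open set around any such product.

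Choose general members $H_i\in|m A_i|$ with $m\gg0$. By Bertini, the complete intersection $\tilde C=H_1\cap\dots\cap H_{n-1}$ is a smooth irreducible curve. It avoids no particular locus, and it meets the exceptional locus $\mathrm{Exc}(\mu)$ in finitely many points, or not at all if $\operatorname{codim}\mathrm{Exc}(\mu)\ge2$. Its normal bundle $N_{\tilde C/\tilde X}=\bigoplus \cO_{\tilde C}(mA_i)$ is ample.

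Let $C=\mu(\tilde C)$, a reduced irreducible curve with $[C]=m^{n-1}\mu_*(A_1\cdots A_{n-1})$, assuming $\tilde C\to C$ is birational, which holds for general choices. It remains to check that $\cN_{C/X}$ is ample.
\begin{itemize}
\item Off the finitely many points $\mu(\tilde C\cap\mathrm{Exc}(\mu))$, $\mu$ is an isomorphism, so $\cN_{C/X}$ agrees there with $N_{\tilde C/\tilde X}$.
\item A quotient of $\cN_{C/X}$ that is ample away from finitely many points need not be ample, so I will instead compare via the normalization $\nu:\tilde C\to C$. There is a natural map $N_{\tilde C/\tilde X}\to \nu^*\cN_{C/X}$, induced from $d\mu$ and the conormal sequences, which is an isomorphism generically.
\item Ampleness of $\cN_{C/X}$ is equivalent to ampleness of $\nu^*\cN_{C/X}$ modulo torsion, since $\nu$ is finite. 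A generically surjective map of vector bundles from an ample bundle on a smooth curve does not in general force the target to be ample; the target can lose degree at the points where the map drops rank.
\end{itemize}

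So the key step is to control $\nu^*\cN_{C/X}$ at the finitely many bad points. I would try to avoid them entirely. First replace $\mu$, via Hironaka, by a sequence of blowups along smooth centers, so that $\mathrm{Exc}(\mu)$ has image of codimension at least $2$ and $\mu$ is an isomorphism over $X\setminus Z$ with $\dim Z\le n-2$. Then a general complete intersection curve still meets $\mu^{-1}(Z)$, which is a divisor. The alternative is to work on $X$ directly: take $C$ smooth but only of class approximating the target. This is where I expect the main obstacle.

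My actual plan for the hard step is a local computation at each point $p\in C$ lying over $Z$. Locally $\cN_{C/X}$ is the dual of $\cI_C/\cI_C^2$. For a unibranch singular point, $\cN_{C/X}$ modulo torsion pulled back to $\tilde C$ is a sheaf $\nu^*\cN\supseteq$ (image of $N_{\tilde C/\tilde X}$). The dualization makes the normal sheaf of a singular curve larger rather than smaller: the colength goes into positive twists, as in the formula $\deg\cN_{C/X}=\deg N + $ (a $\delta$-invariant contribution) for lci curves. I would prove that the comparison map is injective with torsion cokernel $T$, so that $0\to N_{\tilde C/\tilde X}\to \nu^*\cN_{C/X}/\tors\to T\to0$. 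An extension of a torsion sheaf by an ample bundle is ample, because every quotient line bundle receives a nonzero map from the ample piece or has positive degree. This gives ampleness of $\cN_{C/X}$, so $[C]\in\cA_{\mathrm{sh}}(X)$. Taking closures then yields $\Mov(X)\subseteq\cA_{\mathrm{sh}}(X)$.

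The delicate points are twofold. The first is establishing the injectivity and torsion-cokernel claim for the map induced by $\mu$; I would prove it by checking that $\mu^*$ maps $\cI_C/\cI_C^2$ to $\cI_{\tilde C}/\cI_{\tilde C}^2$ with torsion-free kernel-free dual. The second is ensuring that $\tilde C\to C$ is birational onto its image.
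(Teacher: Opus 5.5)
Your final plan follows the paper's proof: BDPP gives $\cA_{\mathrm{sh}}(X)\subseteq\Mov(X)$; you take a Bertini complete-intersection curve $\tilde C$ on the modification; you compare $N_{\tilde C/\tilde X}$ with $E:=\nu^*\cN_{C/X}/\tors$; you use that ampleness survives torsion enlargement; and you descend along $\nu$. However, the step you call the crux is left unproved, and the methods you sketch for it do not work as stated.

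Separately, your third bullet is false. Let $\phi$ be a generically surjective map from an ample bundle to a bundle $F$ on a smooth curve. Then $\operatorname{im}\phi$ is a locally free quotient of an ample bundle, so it is ample and of full rank, and $F$ is a torsion enlargement of it, hence ample. Degree is gained, not lost, where $\phi$ drops rank. This is exactly the fact you use at the end (\cref{lem:torsion}), so the Hironaka detour is unnecessary.

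The real issue is the comparison map. The $d\mu$-induced map does not in general descend to $N_{\tilde C/\tilde X}\to\nu^*\cN_{C/X}$. The composite $T_{\tilde C}\to T_{\tilde X}|_{\tilde C}\to\nu^*(T_X|_C)\to\nu^*\cN_{C/X}$ is only generically zero and can hit torsion. Take $X=\mathbb A^3$ and let $C$ be the three coordinate axes, the image of their disjoint strict transforms under the blowup of the origin. Recording values on $xy,yz,zx$ identifies $\cN_{C/X}$ with $(x,y)\oplus(y,z)\oplus(x,z)\subset\cO_C^{3}$. On the $x$-branch, parametrized by $t\mapsto(t,0,0)$, the field $\partial_t$ maps to $1\otimes(y,0,z)$, a nonzero torsion element of $\nu^*\cN_{C/X}$.

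Your last suggestion does not close this either. Dualizing $\nu^*(\cI_C/\cI_C^2)\to\cI_{\tilde C}/\cI_{\tilde C}^2$ gives $N_{\tilde C/\tilde X}\hookrightarrow(\nu^*(\cI_C/\cI_C^2))^\vee$, and this sheaf can strictly contain $E$. In the example, $E=t\cdot(\nu^*(\cI_C/\cI_C^2))^\vee$ on each branch. Ampleness does not pass to full-rank subsheaves, so this says nothing about $E$. A unibranch local computation cannot suffice either: $C$ is multibranch wherever $\tilde C$ meets a divisor contracted to a point in several points.

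The missing argument is short: pass to the torsion-free quotient first. The composite $T_{\tilde X}|_{\tilde C}\to\nu^*\cN_{C/X}\to E$ kills $T_{\tilde C}$ over the isomorphism locus of $\mu$, hence everywhere because $E$ is torsion-free. It therefore factors through a generically isomorphic map $N_{\tilde C/\tilde X}\to E$ of rank-$(n-1)$ bundles on a smooth curve. Such a map is injective with torsion cokernel (\cref{lem:birational-sheaf}), and no local analysis is needed.

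Finally, your descent step ``since $\nu$ is finite'' must account for the full projective fibres of $\mathbb P(\nu^*\cN_{C/X})$ over the torsion points. Split $\nu^*\cN_{C/X}\simeq E\oplus T$ and check $\cO(1)$ on each component. Then descend along the finite surjection $\mathbb P(\nu^*\cN_{C/X})\to\mathbb P(\cN_{C/X})$, as in \cref{lem:coherent-descent}.
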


Let $\Asm(X)$ be the closed convex cone generated by smooth irreducible embedded curves $C\subset X$ with ample normal bundle $N_{C/X}$. Requiring the curves to be smooth leads to the following stronger question.

\begin{question}\label{q:main}
For every smooth complex projective variety $X$, does
\[
 \Mov(X)=\Asm(X)
\]
hold?
\end{question}

To distinguish cone generation from approximation by individual curves, define
\[
\Psm(X)=\overline{\{a[C]:a>0,\ C\subset X\text{ smooth irreducible},\ N_{C/X}\text{ ample}\}}.
\]
We do not assume that $\Psm(X)$ is convex; its closed convex hull is $\Asm(X)$. Equality $\Psm(X)=\Mov(X)$ therefore gives the stronger conclusion that each movable class is a limit of rescaled individual curve classes.

\noindent\textbf{Related work.}
Peternell studied subvarieties with ample normal bundles and conditions under which their curve classes lie in the interior of the Mori cone \cite{Pet12Ample}. Ottem proved that a smooth curve with ample normal bundle has class in the interior of the Mori cone; if the curve is also strictly nef, its class lies in the interior of the movable cone \cite[Theorem~2]{Ottem16}. Lau extended these methods to locally ample curves, including singular settings, and obtained interior movability under additional intersection hypotheses \cite{Lau18}. Fulger developed Seshadri constants for curve classes and gave a proof of the interior-Mori statement valid in arbitrary characteristic \cite{Fulger21}.

Fulger and Lehmann introduced positive cones of numerical cycle classes \cite{FL17}. Their construction in \cite[Example~5.11]{FL17} yields the Mori dream space case considered here: the complete-intersection curves on small modifications lie in the common isomorphism locus and have ample normal bundles. We record this consequence together with an application of the minimal-model theory of Birkar--Cascini--Hacon--McKernan \cite{BCHM10}. Witt Nystr\"om proved the projective transcendental duality between pseudoeffective and movable classes and identified the latter with the closure of balanced metric classes \cite{WittNystrom19}. \Cref{q:main} asks whether the movable curve cone admits generators given by smooth embedded curves with ample normal bundles.

\noindent\textbf{The approach.}
By BDPP duality, the movable cone is the closed convex cone generated by strongly movable classes
\begin{equation}\label{eq:strongly-movable}
\mu_*[A_1\cdots A_{n-1}],\qquad\mu:Y\longrightarrow X\text{ birational},\quad A_i\text{ very ample on a smooth }Y.
\end{equation}
We first show that the image of a smooth complete-intersection curve has ample coherent normal sheaf, even when the image is singular. This proves \cref{thm:sheaf-cone}. For the smooth-curve question, we must also control the singularities of the image while preserving its numerical class and the ampleness of its normal bundle. We do this in several settings using incidence geometry, moving families, and explicit constructions in exceptional divisors. The remaining inclusion in general is
\begin{equation}
 \Mov(X)\stackrel{?}{\subseteq}\Asm(X).
\end{equation}

The following theorem collects the cases in which we obtain a positive answer to \cref{q:main}.

\begin{theorem}
The equality
\begin{equation}
 \cA_{\mathrm{sm}}(X)=\Mov(X)
\end{equation}
holds when $X$ is a smooth Mori dream space, or when there is an effective $\Q$-divisor $\Delta_0$ such that $(X,\Delta_0)$ is klt and $-(K_X+\Delta_0)$ is nef. For $\dim X\ge3$, it also holds when $X$ carries an action of a positive-dimensional abelian variety with no zero-dimensional orbit, or when $X=Z\times\mathbb P^r$ with $\dim Z,r\ge1$. 
\end{theorem}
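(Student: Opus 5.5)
The inclusion $\Asm(X)\subseteq\Mov(X)$ holds in every case and needs no new input. A smooth curve $C$ with ample $N_{C/X}$ has an ample, hence nef, normal sheaf, so its class lies in the BDPP cone generated by curves with nef normal sheaves. By \cite[Theorem~2.4]{BDPP13} that cone is $\Mov(X)$. The whole task is therefore the reverse inclusion. By \eqref{eq:strongly-movable}, it suffices in each setting to exhibit a family of generators of $\Mov(X)$ whose classes are limits of positive multiples of classes of smooth embedded curves with ample normal bundle. The plan is to reduce each case to producing such curves either directly or inside an open set where a birational model agrees with $X$.

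\emph{Mori dream space and klt log Fano-type cases.} If $X$ is a Mori dream space, $\Eff(X)$ is decomposed into finitely many Mori chambers. Each chamber is $f_i^*\Nef(X_i)+\mathbb R_{\ge0}\{\text{exceptional divisors}\}$ for small $\Q$-factorial modifications $f_i:X\dashrightarrow X_i$, together with contractions. Dualizing, $\Mov(X)$ is generated by the images of complete-intersection classes $H_1\cdots H_{n-1}$ with $H_j$ ample on some $X_i$. Following \cite[Example~5.11]{FL17}, since $X\dashrightarrow X_i$ is an isomorphism in codimension one, the indeterminacy locus on $X_i$ has codimension $\ge2$. A general complete-intersection curve of very ample multiples $mH_j$ therefore avoids it by Bertini. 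Such a curve is smooth, and its normal bundle $\bigoplus\cO_C(mH_j)$ is ample. It transports isomorphically to a smooth curve in $X$ with the same normal bundle. Since ample classes on the $X_i$ span open chambers whose closures cover $\Mov(X)$, this gives the equality.

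For the pair $(X,\Delta_0)$ klt with $-(K_X+\Delta_0)$ nef, I would try to invoke BCHM \cite{BCHM10}. For any big divisor $D$, running a $(K_X+\Delta_0+\epsilon D)$-MMP, or using that $-(K_X+\Delta_0)$ nef makes $K_X+\Delta_0+D$ "big plus nef," yields finite generation on the relevant adjoint rings. Hence there are finitely many models that cover, or approximate arbitrarily well, the chambers of big divisors. Since movable classes are limits of interior ones, I would approximate any class in the interior of $\Mov(X)$ by duals of chambers coming from minimal models. I would then repeat the small-modification argument, with divisorial contractions handled by noting that their images still avoid the codimension $\ge2$ centers. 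This approximation step, controlling that finitely many models suffice up to closure, is where I expect to lean most heavily on BCHM.

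\emph{Abelian action and products, $\dim X\ge3$.} Here the generators are $\mu_*[A_1\cdots A_{n-1}]$ with $\mu$ a genuine blowup. The image curve $\mu(C)$ is singular where $C$ meets the exceptional locus. The plan is to replace $\mu(C)$ by a smooth curve of the same class using the group action or the product structure. For an abelian variety $G$ acting without fixed points, I would translate a general curve by $g\in G$. Since $\dim X\ge3$, a generic translate of a smoothing (via incidence correspondences) of $\mu(C)$ meets no bad locus. Freeness of orbits supplies a surjection $\mathfrak g\otimes\cO\to T_X$ along orbit directions, which contributes enough positivity to keep the normal bundle ample after deformation.

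For $X=Z\times\Pj^r$, I would use a finite cover or a projection to $Z$. I would realize the class as the class of a curve that is a graph of a general morphism to $\Pj^r$ over a smooth curve in $Z$, or a deformation thereof. Such a curve is smooth, and its normal bundle is an extension of $f^*T_{\Pj^r}$ (ample for high-degree maps) by the normal bundle in $Z$. That normal bundle must be made ample by combining it with a high-degree twist using the extension. This last step, namely making the normal bundle in the $Z$-direction ample while preserving the numerical class and smoothness, is the main obstacle. I would first try to attack it by gluing a comb of rational curves in $\Pj^r$-fibers onto a smooth curve and smoothing, using the standard result that sufficiently many free teeth make the normal bundle ample.
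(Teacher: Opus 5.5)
Your Mori dream space argument is essentially the paper's. The generation statement really comes from \cite[Example~5.11]{FL17}: chambers covering the movable \emph{divisor} cone do not by themselves give generators of $\Mov(X)$. The abelian-action and $Z\times\Pj^r$ cases have a genuine gap. The curve $\mu(C)$ is singular at the finite set $P$ of images of prime divisors that $\mu$ contracts to points, where several points of $C$ collapse. Translation by $g\in G$ is an automorphism of $X$ and preserves these singularities. Your plan therefore rests on an unconstructed smoothing of $\mu(C)$ that keeps the class and an ample normal bundle, and that smoothing is the whole difficulty. The proposed source of positivity also fails: positive-dimensional orbits do not make $\mathfrak g\otimes\cO_X\to T_X$ surjective. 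These vector fields only span orbit directions and give no control transverse to them. For $Z\times\Pj^r$, the graph $\Gamma_f$ of $f:C\to\Pj^r$ over a smooth $C\subset Z$ fits into $0\to f^*T_{\Pj^r}\to N_{\Gamma_f/X}\to N_{C/Z}\to0$. Ampleness thus needs smooth curves in $Z$ with ample normal bundle and classes approximating multiples of $\operatorname{pr}_{Z*}\beta$. That is \cref{q:main} on $Z$ itself, so the reduction is circular once $\dim Z\ge3$. Comb teeth in $\Pj^r$-fibres do not help. Their normal bundles $\cO(1)^{r-1}\oplus\cO^{\dim Z}$ add nothing in the $Z$-directions, and a smoothing is again a graph over a deformation of $C$.

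The missing idea is to move the bad points inside a larger source rather than moving the curve. Choose $a:B\to G$ from a smooth curve with $b\mapsto a(b)\cdot p$ nonconstant for each $p\in P$. Put $F:V=Y\times B\to X$, $F(y,b)=a(b)\cdot\mu(y)$. No prime divisor of $V$ maps to a point, so the incidence count of \cref{lem:larger-source} embeds a general complete-intersection curve of $V$ into $X$ with ample normal bundle; this is where $n\ge3$ is used. With last divisor $H_0+m\operatorname{pr}_B^*L$, \cref{prop:fixed-error} gives classes $md\beta+\eta$, since connected $G$ acts trivially on $N_1(X)$; rescaling gives $\beta$. For $Z\times\Pj^r$, the action $[x_0:\cdots:x_r]\mapsto[tx_0:x_1:\cdots:x_r]$ replaces $G$, compactified over $\Pj^1$ by blowing up its two base loci on $Y\times\Pj^1$.

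Your klt sketch is also off. The $(K_X+\Delta_0+\epsilon D)$-MMP is not a $D$-MMP, since that divisor is $\epsilon D-N$ with $N=-(K_X+\Delta_0)$. Finiteness of models is also unavailable, because $X$ need not be a Mori dream space. The paper treats each big movable rational $D\sim_\Q A+E$ separately. It writes $\varepsilon D\sim_\Q K_X+\Theta$ with $\Theta=\Delta_0+\varepsilon E+G_0$, where $G_0\sim_\Q N+\varepsilon A$ is general, so $\Theta$ is klt and big. BCHM then gives a $D$-nef model $\phi:X\dashrightarrow X'$, and movability excludes divisorial contractions. Hence $\langle D^{n-1}\rangle=\lim_{t\downarrow0}\phi^{-1}_*(D'+tH')^{n-1}$ with $H'$ ample on $X'$, which lies in $\Asm(X)$ by \cref{lem:small-modifications}.
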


\noindent\emph{Explicit blowup cones.}
For the blowup results, we also compute the cones and construct individual curve representatives. Suppose that $\rho(X)=1$, that $L$ is ample, and that $Z\subset X$ is a smooth connected positive-dimensional scheme-theoretic complete intersection of ample divisors $D_i\equiv a_iL$ with $0<a_1\le\cdots\le a_r$ and $2\le r\le\dim X-1$. For $Y=\Bl_Z X$, write $H=\pi^*L$, let $E$ be the exceptional divisor, and put $h=H\cdot\alpha$ and $e=E\cdot\alpha$. Then \cref{thm:ci-all-codim} proves the explicit identity
\[
\Asm(Y)=\Mov(Y)=\{(h,e):h\ge0,\ 0\le e\le a_1h\}.
\]
Every rational interior ray has an individual smooth ample-normal representative. For smooth transverse codimension-two complete intersections, \cref{prop:ci-two-cones} further identifies this cone with the nef complete-intersection cone.

The proofs combine comparison of normal sheaves under birational maps, incidence arguments on normal sources, and curves in exceptional ruled surfaces. They require neither the vanishing $H^1(C,N_{C/X})=0$ nor a general smoothing theorem for singular embedded curves.

\Cref{sec:normal-sheaf} proves \cref{thm:sheaf-cone}. \Cref{sec:smooth-embeddings} develops the embedding criterion and birational modifications used in \cref{sec:positive-cases} to treat minimal models and blowups. \Cref{sec:moving-families} treats moving families, products, and finite descent. Finally, \cref{sec:conditional-replacement} gives conditional methods for smooth replacement using point-saturated models and deformations of centres.

\section{Ampleness on curves and birational normal comparison}\label{sec:normal-sheaf}
We prove the coherent-normal-sheaf formulation of BDPP's expectation by comparing normal sheaves under birational morphisms. The comparison preserves ampleness even when the image curve is singular.

\begin{lemma}\label{lem:torsion}
Let $C$ be a smooth complex projective curve, and let
\[
 0\longrightarrow E\longrightarrow F\longrightarrow T\longrightarrow0
\]
be an exact sequence with $E,F$ vector bundles of the same rank and $T$ torsion. If $E$ is ample, so is $F$.
\end{lemma}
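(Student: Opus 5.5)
We verify ampleness of $F$ by the quotient criterion for vector bundles on a smooth projective curve (Hartshorne): a vector bundle on $C$ is ample if and only if every nonzero quotient bundle has positive degree. Since $C$ is smooth, a quotient sheaf of a vector bundle is torsion-free exactly when it is locally free. So it suffices to show that every locally free quotient $F\twoheadrightarrow Q\neq0$ satisfies $\deg Q>0$.

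\emph{Step 1: restrict the quotient map to $E$.} Fix a surjection $q\colon F\twoheadrightarrow Q$ with $Q$ locally free of rank $s\ge1$. Let $Q'\subseteq Q$ be the image of $E$ under $q$. Because $F/E\cong T$ is torsion, the cokernel $Q/Q'$ is a quotient of $T$ and hence torsion. It follows that $Q'$ is a subsheaf of the locally free sheaf $Q$ of full rank $s$. In particular $Q'$ is torsion-free, hence locally free of rank $s$, and it is a nonzero quotient of $E$.

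\emph{Step 2: compare degrees.} Since $E$ is ample, the criterion gives $\deg Q'>0$. From
\[
0\longrightarrow Q'\longrightarrow Q\longrightarrow Q/Q'\longrightarrow 0
\]
we get $\deg Q=\deg Q'+h^0(Q/Q')\ge\deg Q'>0$. As $Q$ was an arbitrary nonzero locally free quotient of $F$, the bundle $F$ is ample.

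\emph{Alternative route.} One can also argue geometrically, bypassing Hartshorne's criterion. The inclusion $E\hookrightarrow F$ is an isomorphism away from $\Supp T$, so it induces a rational map $\Pj(F)\dashrightarrow\Pj(E)$ under which $\cO(1)$ is compatible. A Seshadri-type or Nakai check on curves in $\Pj(F)$ would then work, but it requires controlling curves contained in the fibres over $\Supp T$. The degree criterion avoids this.

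\emph{Main obstacle.} Once the criterion is available, the only delicate point is Step~1. There we must make sure the image $Q'$ has the same rank as $Q$, which holds because $T$ is torsion and $E$, $F$ have equal rank. We must also make sure $Q'$ is a genuine nonzero quotient bundle of $E$, so that ampleness of $E$ applies to it.
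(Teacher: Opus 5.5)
Your proof is correct and is essentially the paper's argument: both rest on Hartshorne's characteristic-zero criterion that a bundle on a smooth projective curve is ample if and only if every nonzero quotient bundle has positive degree, which is the same as requiring $\mu_{\min}>0$. Your step (the image $Q'$ of $E$ in a quotient bundle $Q$ of $F$ has full rank and torsion cokernel, so $\deg Q\ge\deg Q'>0$) is the dual form of the paper's step $F^\vee\hookrightarrow E^\vee\Rightarrow\mu_{\max}(F^\vee)\le\mu_{\max}(E^\vee)$, phrased directly in terms of quotients rather than Harder--Narasimhan slopes.
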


\begin{proof}
Dualizing gives $F^\vee\hookrightarrow E^\vee$. Thus $\mu_{\max}(F^\vee)\leq\mu_{\max}(E^\vee)$, or $\mu_{\min}(F)\geq\mu_{\min}(E)>0$. In characteristic zero the positivity of the smallest Harder--Narasimhan slope is equivalent to ampleness on a smooth projective curve; see \cite{Har66}.
\end{proof}

\begin{lemma}\label{lem:coherent-descent}
Let $\nu:\widetilde C\to C$ be a finite surjective morphism of projective curves, with $\widetilde C$ smooth and connected. Let $\cF$ be a coherent sheaf on $C$, generically locally free. If
\[
 (\nu^*\cF)/\tors
\]
is ample on $\widetilde C$, then $\cF$ is ample.
\end{lemma}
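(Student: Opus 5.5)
The plan is to relate the projective bundles directly and reduce ampleness of $\cO_{\mathbb P_C(\cF)}(1)$ to ampleness after a finite pullback. Put $P=\mathbb P_C(\cF)=\Proj\operatorname{Sym}\cF$. Form the base change $P\times_C\widetilde C=\mathbb P_{\widetilde C}(\nu^*\cF)$, since $\operatorname{Sym}$ commutes with pullback. The quotient $\nu^*\cF\to G:=(\nu^*\cF)/\tors$ is surjective, so it induces a closed immersion $\mathbb P_{\widetilde C}(G)\hookrightarrow\mathbb P_{\widetilde C}(\nu^*\cF)$ compatible with the tautological $\cO(1)$'s. Composing with the projection gives a morphism
\[
 \psi:\mathbb P_{\widetilde C}(G)\longrightarrow P,\qquad \psi^*\cO_P(1)=\cO_{\mathbb P(G)}(1).
\]
This morphism is finite, being a closed immersion followed by the base change of the finite map $\nu$.

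The first key step is to show that $\psi$ is surjective, or at least that its image meets every irreducible component of $P$ in a suitable sense. Over the open set $U\subset C$ where $\cF$ is locally free, and where $\nu$ is étale or at least where $\nu^*\cF$ is torsion-free, $\psi$ is surjective onto $P|_U$. The difficulty is that $P$ may have extra components lying over the finitely many points where $\cF$ is not locally free. These components are fibres $\mathbb P(\cF\otimes k(p))$ of larger dimension, and they can be missed by $\mathbb P(G)$.

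I would handle this with a Nakai--Moishezon or Seshadri-type criterion for the ampleness of $\cO_P(1)$, as follows.

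First, $\cO_P(1)$ is relatively very ample over $C$, so its restriction to every fibre $\mathbb P(\cF\otimes k(p))$, a projective space, is ample. Any subvariety $V\subset P$ contained in a fibre therefore has positive top self-intersection.

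Second, suppose $V\subset P$ is a subvariety dominating $C$. Then $V$ is the closure of $V\cap P|_U$, which lies in the image of $\psi$. Choose a subvariety $W\subset\mathbb P(G)$ mapping finitely onto $V$. Ampleness of $\cO_{\mathbb P(G)}(1)$ gives
\[
 (\cO(1)^{\dim V}\cdot V)=\tfrac{1}{\deg}(\psi^*\cO(1)^{\dim W}\cdot W)>0.
\]

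Every subvariety of $P$ falls into one of these two cases, so Nakai--Moishezon on the projective scheme $P$ yields ampleness. If the projective-bundle formalism makes Nakai awkward on a non-reduced $P$, I would pass to $P_{\mathrm{red}}$, since ampleness is insensitive to nilpotents.

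The main obstacle I expect is the dominant case, specifically making the identification of $V$ with an image from $\mathbb P(G)$ rigorous. Two points need care. One is that $\nu$ might be ramified or non-flat over points of $U$. The other is that torsion of $\nu^*\cF$ is supported over finitely many points. Both are avoided by shrinking $U$, since closures are determined on a dense open set. A cleaner alternative, which I would try first, is to use the standard fact that for a finite surjective $\psi$ onto the closure of its image, ampleness descends: apply it to the closed subscheme $P'=\overline{P|_U}$, then glue with the fibre components using Nakai as above.
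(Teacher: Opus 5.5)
Your argument is correct, but it deals with the fibres over the non-locally-free points differently from the paper. The paper first works on $\widetilde C$. It splits $\nu^*\cF\simeq G\oplus T$, with $G=(\nu^*\cF)/\tors$ in your notation and $T=\tors(\nu^*\cF)$, using $\operatorname{Ext}^1(G,T)=0$. It then notes that $\mathbb P(G)$, together with the full fibres of $\mathbb P(\nu^*\cF)$ over $\Supp T$, maps finitely and surjectively onto $\mathbb P(\nu^*\cF)$, and finite descent shows that $\nu^*\cF$ is ample. A second descent along the surjective base change $\mathbb P(\nu^*\cF)=\widetilde C\times_C\mathbb P(\cF)\to\mathbb P(\cF)$ finishes the proof.

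You instead use a single finite map $\psi:\mathbb P_{\widetilde C}(G)\to P$, accept that it is not surjective, and verify the Nakai--Moishezon criterion (Kleiman's form, valid on proper, possibly reducible and non-reduced schemes) directly:
\begin{itemize}
\item Vertical subvarieties lie in fibres $\mathbb P(\cF\otimes k(p))$, where $\cO(1)$ restricts to the hyperplane class.
\item Horizontal subvarieties lie in the closed image of $\psi$, because $\psi$ is surjective over the locally free locus $U$. The projection formula through a component $W\subset\psi^{-1}(V)$ then gives positivity.
\end{itemize}
The vertical/horizontal dichotomy uses that $C=\nu(\widetilde C)$ is irreducible, which you should state explicitly.

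Your route needs only one map and no splitting. The splitting is not really needed in the paper either, since the surjection $\nu^*\cF\to G$ already gives the closed immersion $\mathbb P(G)\hookrightarrow\mathbb P(\nu^*\cF)$. The cost is that you use intersection numbers, whereas the paper uses only the formal finite-descent statement. Your ``cleaner alternative'' closes this gap. So does simply adjoining the finitely many fibres $\mathbb P(\cF\otimes k(p))$, $p\notin U$, to the source of $\psi$: this gives a finite surjection onto $P$ whose pullback of $\cO_P(1)$ is ample on every component. That turns your proof into a one-step version of the paper's.
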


\begin{proof}
Set $G=\nu^*\cF$, $T=\tors(G)$ and $E=G/T$. Since $E$ is locally free and $T$ has zero-dimensional support, $\operatorname{Ext}^1(E,T)=H^1(E^\vee\otimes T)=0$; hence $G\simeq E\oplus T$. The disjoint union of $\mathbb P(E)$ and the finitely many full projective fibers $\mathbb P(G\otimes k(p))$ at $\Supp T$ maps finitely and surjectively to $\mathbb P(G)$. Its pullback of $\cO_{\mathbb P(G)}(1)$ is ample on each component: on
$\mathbb P(E)$ by assumption, and on every fiber by the usual $\cO_{\mathbb P^r}(1)$. Finite descent of ampleness gives that $G$ is ample. The base change $\mathbb P(G)=\widetilde C\times_C\mathbb P(\cF)$ is finite and surjective over $\mathbb P(\cF)$; applying finite descent once more proves that $\cF$ is ample.
\end{proof}

\begin{lemma}\label{lem:birational-sheaf}
Let $\mu:Y\to X$ be a birational morphism of smooth projective varieties. Let $\Gamma\subset Y$ be a smooth irreducible curve meeting the isomorphism locus of $\mu$, and let $C=\mu(\Gamma)_{\mathrm{red}}$. If $N_{\Gamma/Y}$ is ample, then $\cN_{C/X}$ is ample as a coherent sheaf.
\end{lemma}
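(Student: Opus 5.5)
The plan is to reduce to \cref{lem:coherent-descent} applied to the finite map $\nu:=\mu|_\Gamma:\Gamma\to C$. Since $\Gamma$ meets the isomorphism locus, $\mu|_\Gamma$ is birational onto $C$, hence finite and surjective. Moreover $\cN_{C/X}$ is generically locally free, being the dual of $\cI_C/\cI_C^2$, which is locally free of rank $n-1$ on the smooth locus of $C$. It therefore suffices to show that $(\nu^*\cN_{C/X})/\tors$ is ample on $\Gamma$. By \cref{lem:torsion}, this will follow once we produce an injection $N_{\Gamma/Y}\hookrightarrow(\nu^*\cN_{C/X})/\tors$ of vector bundles of the same rank $n-1$ with torsion cokernel.

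To build the map, start from the differential $d\mu:T_Y\to\mu^*T_X$. Restricting to $\Gamma$ gives $T_Y|_\Gamma\to\nu^*T_X|_C$. Next, compose with the natural map $T_X|_C\to\cN_{C/X}$, which is dual to the inclusion $\cI_C/\cI_C^2\to\Omega_X|_C$. Concretely, a tangent vector field acts as a derivation on local equations of $C$. Pull this back by $\nu$ and pass to the torsion-free quotient, obtaining
\[
\phi:T_Y|_\Gamma\longrightarrow \nu^*T_X|_C\longrightarrow (\nu^*\cN_{C/X})/\tors.
\]
We then check that $\phi$ kills $T_\Gamma$. A section $v$ of $T_\Gamma$ maps under $d\mu$ into vectors tangent to $C$ (on the dense open set where $\nu$ is an isomorphism onto the smooth locus). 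There, for any local equation $f\in\cI_C$, we have $d\mu(v)(f)=v(f\circ\mu)=0$, because $f\circ\mu$ vanishes on $\Gamma$. So the image of $T_\Gamma$ is generically zero in a torsion-free sheaf, hence zero, and $\phi$ descends to $\bar\phi:N_{\Gamma/Y}\to(\nu^*\cN_{C/X})/\tors$. Over the open subset $U\subset\Gamma$ lying in the isomorphism locus and over $C_{\mathrm{sm}}$, $d\mu$ is an isomorphism and the map is the usual isomorphism of normal bundles. So $\bar\phi$ is generically an isomorphism between locally free sheaves of rank $n-1$. Since $N_{\Gamma/Y}$ is torsion-free, $\bar\phi$ is injective with torsion cokernel.

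The main obstacle is making this construction rigorous at singular points of $C$ and at points of $\Gamma$ outside the isomorphism locus, where $\cN_{C/X}$ is only a coherent sheaf. The cleanest route is to work dually: construct $\nu^*(\cI_C/\cI_C^2)\to\cI_\Gamma/\cI_\Gamma^2$ via $f\mapsto f\circ\mu$, which is valid since $\mu^{-1}\cI_C\cdot\cO_Y\subset\cI_\Gamma$. Dualizing gives $N_{\Gamma/Y}\to\Hom(\nu^*(\cI_C/\cI_C^2),\cO_\Gamma)$. Then compare with $\nu^*\Hom(\cI_C/\cI_C^2,\cO_C)/\tors$ via the natural map $\nu^*\Hom(\cdot,\cO_C)\to\Hom(\nu^*\cdot,\cO_\Gamma)$. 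The two targets need not coincide, so instead of forcing an injection I would use the composite in the dual direction. The pullback $\nu^*\cN_{C/X}/\tors$ maps to the bundle $\Hom(\nu^*(\cI_C/\cI_C^2),\cO_\Gamma)$, and this map is generically an isomorphism. If that inclusion runs the wrong way for \cref{lem:torsion}, I would instead adapt the argument directly. The key point is that ampleness of $G/\tors$ only requires $\mu_{\min}>0$, and one checks the slope inequality by mapping $N_{\Gamma/Y}$ generically isomorphically into the quotient, arranging the composite as above so that the torsion-free quotient receives $N_{\Gamma/Y}$.
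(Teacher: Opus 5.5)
Your first two paragraphs are the paper's own proof. The composite $T_Y|_\Gamma\to\nu^*(T_X|_C)\to\nu^*\cN_{C/X}\to(\nu^*\cN_{C/X})/\tors$ kills $T_\Gamma$ generically, hence everywhere by torsion-freeness; the induced generically bijective map from $N_{\Gamma/Y}$ then feeds into the torsion-enlargement lemma and the finite-descent lemma.

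Your last paragraph should be dropped. The map $T_X|_C\to\cN_{C/X}$ is the dual of the conormal map $\cI_C/\cI_C^2\to\Omega_X|_C$, injective or not, and it exists for every closed subscheme. So the composite is defined globally, with no special care at singular or exceptional points. The dual detour, by contrast, only maps $N_{\Gamma/Y}$ and $(\nu^*\cN_{C/X})/\tors$ into the common bundle $\Hom(\nu^*(\cI_C/\cI_C^2),\cO_\Gamma)$, and ampleness of the full-rank subsheaf cannot be deduced from that.
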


\begin{proof}
The restriction $\nu=\mu|_\Gamma:\Gamma\to C$ is proper, finite and birational, hence the normalization. Put
\[
 E=(\nu^*\cN_{C/X})/\tors.
\]
The natural map from ambient derivations to embedded normal derivations, composed with $d\mu$, gives
\[
 T_Y|_\Gamma\longrightarrow\nu^*\cN_{C/X}\longrightarrow E.
\]
On the dense open subset where $\mu$ is an isomorphism the map vanishes on $T_\Gamma$. Because $E$ is torsion-free on a smooth curve, it vanishes on $T_\Gamma$ everywhere and factors through
\[
 N_{\Gamma/Y}\longrightarrow E.
\]
This map is an isomorphism generically. Since the two bundles have rank $n-1$, it is injective with torsion cokernel. By \cref{lem:torsion}, $E$ is ample. Apply
\cref{lem:coherent-descent} to $\cN_{C/X}$.
\end{proof}

\begin{proof}[Proof of \cref{thm:sheaf-cone}]
Take a strongly movable generator \eqref{eq:strongly-movable}, and choose general $D_i\in|A_i|$. Bertini gives a smooth irreducible curve
\[
 \Gamma=D_1\cap\cdots\cap D_{n-1}
\]
which meets the isomorphism locus of $\mu$. Its normal bundle is
\[
 N_{\Gamma/Y}\simeq\bigoplus_{i=1}^{n-1}\cO_\Gamma(A_i),
\]
and is ample. The image $C$ has ample coherent normal sheaf by \cref{lem:birational-sheaf}, while $[C]=\mu_*[\Gamma]$. Taking closed convex cones proves $\Mov(X)\subseteq\cA_{\mathrm{sh}}(X)$. Conversely, an ample coherent sheaf is nef, so \cite[Theorem~2.4]{BDPP13} gives $\cA_{\mathrm{sh}}(X)\subseteq\Mov(X)$.
\end{proof}

\section{Smooth embeddings and birational modifications}\label{sec:smooth-embeddings}
A general curve on a birational model may acquire singularities under contraction. We give a criterion for a general complete intersection on a normal source to have a smooth embedded image, then apply it to several birational modifications.

\subsection{Smooth embeddings from normal sources}
The following criterion will also be used for moving families in \cref{sec:moving-families}.

\begin{lemma}\label{lem:larger-source}
Let $g:Z\to X$ be a dominant morphism, where $Z$ is normal integral projective of dimension $N$, and $X$ is smooth projective of dimension $n\ge3$. Suppose no prime divisor of $Z$ maps to a point of $X$. If $A_1,\ldots,A_{N-1}$ are very ample, a general complete-intersection curve
\[
 C=A_1\cap\cdots\cap A_{N-1}
\]
lies in $Z_{\rm reg}$, maps as a closed embedding to $X$, and has ample normal bundle there. Consequently
\[
 g_*(L_1\cdots L_{N-1})\in\Psm(X)
\]
for any nef real divisor classes $L_i$ on $Z$.
\end{lemma}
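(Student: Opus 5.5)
The plan is to work on a general member of the family of complete-intersection curves $C=A_1\cap\cdots\cap A_{N-1}$ (general divisors in the very ample systems $|A_i|$). I would prove three properties in turn: $C\subset Z_{\rm reg}$, $g|_C$ is a closed embedding, and $N_{C/X}$ is ample. The consequence then follows by approximation.

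\emph{Step 1 (smoothness).} Since $Z$ is normal, $Z_{\rm sing}$ has codimension at least $2$. A general complete intersection of $N-1$ very ample divisors therefore misses it, and Bertini makes $C$ a smooth irreducible curve.

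\emph{Step 2 (injectivity of $g|_C$).} I would do a dimension count on the fibre product. Let $P=(Z\times_X Z)\setminus\Delta$. Because very ample systems separate points, requiring $C$ to contain two given distinct points imposes $2(N-1)$ independent conditions. It therefore suffices to show $\dim P\le 2N-3$. Stratify $X$ by fibre dimension: suppose the fibres over an irreducible $W\subset X$ with $\dim W=w$ have dimension $f$, so $P$ picks up a piece of dimension at most $w+2f$, and the preimage has dimension $w+f\le N$.
\begin{itemize}
\item If $w+f=N$, then by irreducibility this is the generic stratum: $w=n$ and $f=N-n$. This gives $w+2f=2N-n\le 2N-3$ since $n\ge3$.
\item Otherwise $w+f\le N-1$, so $w+2f\le N-1+f$. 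This is at most $2N-3$ unless $f=N-1$. But $f=N-1$ forces $w=0$, i.e.\ a prime divisor contracted to a point, which the hypothesis excludes.
\end{itemize}

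\emph{Step 3 (immersion).} I would run the analogous count for the incidence of pairs (point $z$, tangent direction in $\ker dg_z$). Asking $C$ to pass through $z$ with a prescribed tangent direction costs $2N-3$ conditions. Let $S_k\subset Z_{\rm reg}$ be the locus where $\dim\ker dg=k$. I need
\[
\dim S_k+k-1<2N-3 .
\]
To bound $\dim S_k$, I would use generic smoothness of $g|_{S_k}$ together with the fact that $dg$ has rank at most $N-k$ on $S_k$. This gives $\dim g(S_k)\le N-k$, and for large $k$ the bound reduces to the same fibre-dimension analysis as in Step 2, again using $n\ge3$ and the no-contracted-divisor hypothesis. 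This bookkeeping, especially for strata where $k$ exceeds the generic fibre dimension $N-n$, is where I expect the main obstacle. If the direct bound fails, I would fall back on a blow-up/Stein-factorization argument near the bad strata. Injectivity plus immersion on the projective curve $C$ then gives a closed embedding.

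\emph{Step 4 (ample normal bundle).} The general $C$ meets the non-smooth locus of $g$ in finitely many points, so $T_Z|_C\to T_X|_C$ is generically surjective. Passing to quotients by $T_C$, the map
\[
N_{C/Z}\simeq\bigoplus_i\cO_C(A_i)\longrightarrow N_{C/X}
\]
is generically surjective. Its image $E$ is a locally free quotient of an ample bundle, hence ample. Moreover $E\subset N_{C/X}$ has the same rank $n-1$ with torsion cokernel, so \cref{lem:torsion} gives that $N_{C/X}$ is ample.

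\emph{Step 5 (consequence).} Finally, $g_*[C]=g_*(A_1\cdots A_{N-1})$. For nef real $L_i$, I would approximate each $L_i$ by rational ample classes, take multiples that are very ample, and rescale. By continuity of $g_*$ and the intersection product, $g_*(L_1\cdots L_{N-1})$ is then a limit of classes $a[g(C)]$ with $g(C)$ smooth and of ample normal bundle, i.e.\ it lies in $\Psm(X)$.
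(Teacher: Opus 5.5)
Your Steps 1, 2, 4 and 5 match the paper's proof. Your fibre-dimension count for the off-diagonal relation is equivalent to the paper's three cases, and the normal-bundle argument via \cref{lem:torsion} is the same.

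The gap is Step 3. You leave it open, and its set-up is off by one. Requiring $C$ to pass through $z$ with a prescribed tangent line $\ell$ imposes $2N-2$ conditions, not $2N-3$: for each of the $N-1$ divisors, one vanishing condition at $z$ and one condition that its tangent hyperplane contain $\ell$. So what you need is $\dim S_k+k-1<2N-2$. Your stronger inequality $\dim S_k+k-1<2N-3$ fails in situations the hypotheses allow. It rules out rank-zero strata of dimension $N-2$, and divisors along which $dg$ has rank one. For example, take locally $g(u_1,u_2,u_3)=(u_1,u_2^2,u_2^2u_3)$, which is a double cover of a curve blowup ramified along the exceptional divisor. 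It has rank one along $\{u_2=0\}$, which maps onto a curve, so no divisor is contracted to a point. There $\dim S_k+k-1=3=2N-3$. So no bookkeeping can complete Step 3 as you stated it, and the suggested fallback through blowups or Stein factorization is not an argument.

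With the correct count the step is short, and this is how the paper proceeds. Index the strata by the rank $r=N-k$ of $dg$; the incidence excess is then $\dim S-(N-1)-r$.
\begin{itemize}
\item On the generic stratum, $r=n$ and $\dim S\le N$, so the excess is at most $1-n<0$.
\item A stratum with $1\le r<n$ lies in a proper closed subset, since $g$ is dominant and $dg$ has rank $n$ generically in characteristic zero. Thus $\dim S\le N-1$ and the excess is at most $-r<0$.
\item On a rank-zero stratum $dg\equiv0$, so each component maps to a point; this is your bound $\dim g(S_k)\le N-k$ at $k=N$. Dominance and the no-contracted-divisor hypothesis then give $\dim S\le N-2$, hence excess at most $-1$.
\end{itemize}
In particular, the hypothesis $n\ge3$ is needed only in Step 2, not in Step 3.
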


\begin{proof}
A general complete-intersection curve is irreducible and avoids $\operatorname{Sing}Z$, whose codimension is at least two. Put $\mathcal H=\prod_i|A_i|$.

We first exclude two distinct points having the same image. Stratify $X$ into finitely many locally closed pieces on which the dimensions of the components of the fibers of $g$ are constant. Consider a component of the off-diagonal relation
\[
 R=(Z\times_X Z)\setminus\Delta.
\]
If it dominates $X$, its dimension is $2N-n\le2N-3$. If its image closure $B$ is proper of positive dimension $b$, each relevant component of $g^{-1}(B)$ has dimension at most
$N-1$. Thus the two generic fiber dimensions $r_1,r_2$ satisfy $r_i+b\le N-1$, and
\[
 \dim R_{\!B}\le b+r_1+r_2\le2N-b-2\le2N-3.
\]
Over a zero-dimensional stratum the hypothesis gives fiber dimension at most $N-2$, so the relation has dimension at most $2N-4$. Requiring each defining divisor to contain a
prescribed pair of distinct points imposes $2(N-1)$ independent conditions. All bad-pair incidences therefore have dimension smaller than $\dim\mathcal H$.

For tangents, stratify $Z_{\rm reg}$ into smooth locally closed rank strata for $dg$. At a rank-$k$ point, the conditions that the complete intersection pass through that point and have tangent line in $\ker dg$ have total codimension
\[
 (N-1)+k
\]
among tuples with independent first derivatives. Very ampleness supplies the point and first-jet conditions. The rank-$n$ locus has dimension at most $N$, giving an incidence excess at most $1-n<0$. Rank $1\le k<n$ loci are proper and have dimension at most $N-1$, giving excess at most $-k<0$. Finally, every irreducible rank-zero stratum maps to a point in characteristic zero. The hypothesis on contracted divisors bounds its dimension by $N-2$, giving excess at most $-1$. Thus $dg$ is injective on the tangent line to $C$ at every point.

The resulting proper injective immersion is a closed embedding. The normal bundle
\[
 N_{C/Z}=\bigoplus_i\cO_C(A_i)
\]
is ample. Its image in $N_{g(C)/X}$ is a locally free quotient on the smooth curve $C$, and is generically of full target rank: a general $C$ is not contained in the critical locus of $g$. The quotient by this image is torsion. An ample bundle on a smooth curve stays ample under such a torsion enlargement, so $N_{g(C)/X}$ is ample.

For real nef classes $L_i$, approximate each by rational ample classes and take sufficiently divisible very ample multiples. Positive rescaling and the closedness of $\Psm(X)$ give the result.
\end{proof}

In particular, \cref{lem:larger-source} applies when \(\mu:Y\to X\) is a birational morphism of smooth projective \(n\)-folds, \(n\geq3\), that contracts no prime divisor to a point. In this case, a general complete-intersection curve of very ample divisors \(A_1,\ldots,A_{n-1}\) maps isomorphically onto a smooth curve with ample normal bundle. Hence,
\[
  \mu_*(A_1\cdots A_{n-1})\in\Asm(X).
\]

\subsection{Birational modifications}
The next results remove certain point-supported exceptional contributions without changing the relevant numerical curve classes.

\begin{proposition}
Let $\pi:Y\to Z$ be a birational morphism of smooth projective $n$-folds that is an isomorphism outside a finite subset of $Z$. For ample $\Q$-Cartier divisors $A_1,\ldots,A_{n-1}$ on $Y$, the divisors $H_i=\pi_*A_i$ are ample and
\begin{equation}\label{eq:point-exact}
 \pi_*(A_1\cdots A_{n-1})=H_1\cdots H_{n-1}.
\end{equation}
For nef real divisor classes $A_i$, the classes $L_i=\pi_*A_i$ are nef and the same product identity holds with $L_i$ in place of $H_i$. Moreover, if $n\geq3$ and $\nu:Z\to X$ is a dominant morphism to a smooth projective $n$-fold such that no prime divisor of $Z$ maps to a point, then
\[
 (\nu\pi)_*(A_1\cdots A_{n-1})\in\Psm(X)
\]
for nef real classes $A_i$.
\end{proposition}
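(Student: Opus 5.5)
The plan has three parts: establish the structure of $\pi$ near its exceptional locus, prove the numerical identity \eqref{eq:point-exact} by the projection formula, and then deduce the nef and the $\Psm(X)$ statements by continuity and \cref{lem:larger-source}.

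\textbf{Structure of $\pi$.} Since $Z$ is smooth, purity of the exceptional locus shows that $\operatorname{Exc}(\pi)$ is a union of prime divisors $E_1,\dots,E_m$. Each $E_j$ maps to one of the finitely many points over which $\pi$ fails to be an isomorphism. Because $Z$ is smooth, $H_i=\pi_*A_i$ is $\Q$-Cartier, and we can write
\[
\pi^*H_i=A_i+F_i,\qquad F_i=\sum_j c_{ij}E_j .
\]
The divisor $-F_i\equiv_\pi A_i$ is $\pi$-ample, and $\pi_*F_i=0$. The negativity lemma therefore gives that $F_i$ is effective.

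\textbf{The numerical identity.} Fix a Cartier divisor $D$ on $Z$. By the projection formula,
\[
\pi_*(A_1\cdots A_{n-1})\cdot D=A_1\cdots A_{n-1}\cdot\pi^*D,
\qquad
H_1\cdots H_{n-1}\cdot D=\pi^*H_1\cdots\pi^*H_{n-1}\cdot\pi^*D .
\]
Substitute $\pi^*H_i=A_i+F_i$ and expand the right-hand product. Every term containing some $F_i$ is an intersection computed on some $E_j$. On $E_j$ the class $\pi^*D|_{E_j}$ is numerically trivial, because $E_j$ maps to a point. Hence all these terms vanish, the two intersection numbers agree, and \eqref{eq:point-exact} follows numerically.

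\textbf{Ampleness of $H_i$.} I expect this to be the main point requiring care. For an irreducible curve $C\subset Z$, let $C'$ be its strict transform, which is not contained in $\bigcup E_j$. Then
\[
H_i\cdot C=A_i\cdot C'+F_i\cdot C'>0,
\]
using that $A_i$ is ample and $F_i$ is effective with $C'\not\subset\Supp F_i$. Next, for $m$ divisible, $|mA_i|$ is base point free. The sections of $mA_i+mF_i$ obtained by adding $mF_i$ show that the base locus of $|mH_i|$ lies in the finite set $\pi(\Supp F_i)$. By Zariski--Fujita, $H_i$ is semiample. A semiample divisor that is positive on every curve is ample, since the morphism it defines contracts no curve.

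\textbf{Nef real case.} Approximate each nef $A_i$ by ample $\Q$-classes. Since $\pi_*$ is linear and continuous on $N^1$, each $L_i=\pi_*A_i$ is a limit of ample classes and hence nef. The product identity passes to the limit by continuity of intersection numbers.

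\textbf{The $\Psm(X)$ statement.} I would not apply \cref{lem:larger-source} to $\nu\pi$ directly, because the $E_j$ are prime divisors of $Y$ that are contracted to points. Instead, the identity already proved gives
\[
(\nu\pi)_*(A_1\cdots A_{n-1})=\nu_*(L_1\cdots L_{n-1})
\]
with each $L_i$ nef on $Z$. Now apply \cref{lem:larger-source} to $\nu:Z\to X$. Here $Z$ is smooth, hence normal, $N=n\ge3$, and no prime divisor of $Z$ maps to a point by hypothesis. This gives the membership in $\Psm(X)$.
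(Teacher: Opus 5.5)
Your proof is correct, and its skeleton is the paper's. You write $\pi^*H_i=A_i+F_i$ with $F_i\ge0$ exceptional, get $H_i\cdot C>0$ from strict transforms, and kill every term containing an $F_i$ because it lives over finitely many points. You then pass to nef classes by approximation and apply \cref{lem:larger-source} to $\nu$ rather than to $\nu\pi$.

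The differences are local. You obtain $F_i\ge0$ from the negativity lemma. The paper instead moves $A_i$ to an effective representative with no exceptional component, so that $F_i$ is simply the exceptional part of $\pi^*\pi_*A_i$. For the identity, you pair both sides with $\pi^*D$ and use $\pi^*D|_{E_j}\equiv0$. This is the dual form of the paper's remark that one-cycles supported over the exceptional points push forward to zero.

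The genuine divergence is the ampleness step. The paper upgrades nefness to ampleness by Nakai--Moishezon, bounding $H_i^k\cdot W\ge A_i^k\cdot\widetilde W>0$ via the factorization $(\pi^*H_i)^k-A_i^k=F_i\sum_{j}(\pi^*H_i)^{k-1-j}A_i^j$ restricted to the strict transform. You bypass all higher-dimensional intersection numbers. You observe that $\operatorname{Bs}|mH_i|\subseteq\pi(\Supp F_i)$ is finite, invoke Zariski--Fujita for semiampleness, and conclude that the associated morphism contracts no curve and is therefore finite. Your route replaces a check over all subvarieties with a single base-locus observation, at the cost of importing Zariski's theorem. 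The paper's route stays within Nakai--Moishezon and the nonnegativity of nef classes on effective divisors.
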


\begin{proof}
First suppose the $A_i$ are ample rational classes. Clear denominators and choose effective representatives with no exceptional component. Then $\pi^*H_i=A_i+F_i$ with $F_i\geq0$ exceptional. If $B\subset Z$ is an irreducible curve and $\widetilde B$ its strict transform, then
\[
 H_i\cdot B=A_i\cdot\widetilde B+F_i\cdot\widetilde B>0,
\]
because $\widetilde B\not\subset\operatorname{Supp}F_i$. Thus $H_i$ is nef. For a positive-dimensional subvariety $W\subset Z$ of dimension $k$, expand
\[
(\pi^*H_i)^k-A_i^k=F_i\sum_{j=0}^{k-1}(\pi^*H_i)^{k-1-j}A_i^j.
\]
The strict transform $\widetilde W$ is not contained in $F_i$; the restriction $F_i|_{\widetilde W}$ is effective, and both $A_i$ and $\pi^*H_i$ are nef. Hence $H_i^k\cdot W\geq A_i^k\cdot\widetilde W>0$. Nakai--Moishezon proves that $H_i$ is ample. Expanding
$A_i=\pi^*H_i-F_i$, every mixed product containing an $F_i$ is a one-cycle supported over the finite exceptional image and pushes forward to zero. This proves \eqref{eq:point-exact}.

For nef real $A_i$, approximate each class by ample rational classes. Their pushforwards are ample by the preceding argument, so $L_i$ is nef. The mixed-product identity passes to the limit. Finally,
\[
(\nu\pi)_*(A_1\cdots A_{n-1})=\nu_*(L_1\cdots L_{n-1}),
\]
and the latter belongs to $\Psm(X)$ by \cref{lem:larger-source}.
\end{proof}

For an ordinary point blowup, the nefness assertion also has an elementary proof: write $A_i=\pi^*L_i-a_iE$. A line in $E\simeq\Pj^{n-1}$ gives $a_i\geq0$, and every curve $C\subset Z$ has a strict transform $\widetilde C\not\subset E$, so $L_i\cdot C=A_i\cdot\widetilde C+a_iE\cdot\widetilde C\geq0$. Every mixed term containing $E$ is a one-cycle supported over a point and pushes to zero. Iteration therefore removes a terminal block of point blowups without losing nefness.

A curve blowup performed after a point blowup can change its exceptional divisor; the preceding argument removes such a block only when its entire exceptional image on the intermediate target is finite.

\begin{proposition}
Let $f:V\to W$ and $\rho:W\to X$ be projective birational morphisms of smooth projective $n$-folds, $n\geq3$. Put $Z=f(\operatorname{Exc}(f))$. Suppose $\rho(Z)$ is finite and
every curve contained in $Z$ has numerical class in the closed cone generated by curves of $W$ not contained in $Z$. Then $f_*A$ is ample for every ample rational divisor $A$ on $V$, and
\[
 (\rho f)_*(A_1\cdots A_{n-1})=\rho_*\bigl((f_*A_1)\cdots(f_*A_{n-1})\bigr).
\]
If $\rho$ contracts no divisor to a point, this class belongs to $\cA_{\mathrm{sm}}(X)$.
\end{proposition}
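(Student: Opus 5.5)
I would follow the template of the preceding proposition, replacing the ``finite exceptional image'' hypothesis by the numerical hypothesis on curves in $Z$.

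\emph{Step 1: ampleness of $f_*A$.} Let $A$ be ample and rational on $V$. Clear denominators and take an effective representative of $A$ with no $f$-exceptional component. Then $H=f_*A$ is Cartier on the smooth $W$, and $f^*H=A+F$ with $F\ge0$ exceptional. For an irreducible curve $B\not\subset Z$, its strict transform $\widetilde B$ is not contained in $\operatorname{Supp}F$, so
\[
H\cdot B=A\cdot\widetilde B+F\cdot\widetilde B>0 .
\]
Hence $H$ is positive on every curve not in $Z$. By hypothesis, every curve in $Z$ has class in the closed cone spanned by such curves, so $H$ is nef on $W$. For a positive-dimensional subvariety $S\not\subset Z$, the expansion of $(f^*H)^k-A^k$ used in the preceding proposition gives $H^k\cdot S\ge A^k\cdot\widetilde S>0$.

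The main obstacle is a subvariety $S\subset Z$ of dimension $k\ge2$. I would first try to reduce to the curve case. By Kleiman's criterion it suffices to show that $H$ lies in the interior of the nef cone, that is, $H\cdot\gamma>0$ for every nonzero $\gamma\in\overline{\operatorname{NE}}(W)$. Write $\overline{\operatorname{NE}}(W)$ as the closed cone generated by classes of curves not in $Z$ (these generate it, by the hypothesis). Choose $\varepsilon>0$ with $A-\varepsilon f^*G$ still ample for a fixed ample $G$ on $W$. Running the same strict-transform argument with $A-\varepsilon f^*G$ shows $H-\varepsilon G$ is nonnegative on curves outside $Z$, hence nef. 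Then $H=(H-\varepsilon G)+\varepsilon G$ is ample. This uniform argument is what I expect to carry the proof. The delicate point is keeping the representative of $A-\varepsilon f^*G$ free of exceptional components, which I handle by rationality and by choosing $\varepsilon$ rational.

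\emph{Step 2: the product identity.} Write $f^*H_i=A_i+F_i$ with $H_i=f_*A_i$, and expand $A_1\cdots A_{n-1}=\prod(f^*H_i-F_i)$. The term with no $F_i$ gives $f_*\prod f^*H_i=\prod H_i$ by the projection formula. Every other term is a one-cycle supported on $\operatorname{Exc}(f)$, so its $f$-pushforward is supported on $Z$. Since $\rho(Z)$ is finite, its $\rho$-pushforward vanishes. Applying $\rho_*$ gives the stated identity.

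\emph{Step 3: smoothness.} Since the $H_i$ are ample on $W$, and $\rho:W\to X$ is a birational morphism of smooth $n$-folds with $n\ge3$ contracting no divisor to a point, the remark after \cref{lem:larger-source} applies to $g=\rho$ and very ample multiples of the $H_i$. It shows that $\rho_*(H_1\cdots H_{n-1})$ is a positive multiple of the class of a smooth curve with ample normal bundle, so it lies in $\Asm(X)$.
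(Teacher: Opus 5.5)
Your proposal is correct and, once you settle on the uniform $\varepsilon$-argument in Step 1, it is essentially the paper's proof. The identity $f^*(f_*A-\varepsilon G)=(A-\varepsilon f^*G)+F$, with $F\ge0$ exceptional, gives nonnegativity on curves outside $Z$; the numerical hypothesis extends it to curves in $Z$; and your Steps 2–3 match the paper's expansion argument and its appeal to \cref{lem:larger-source}. The ``delicate point'' you flag is moot: $F=f^*f_*A-A$ is unchanged when $A$ is replaced by $A-\varepsilon f^*G$, so no new representative free of exceptional components is needed.
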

\begin{proof}
Put $A_0=f_*A$. The negativity lemma gives $F=f^*A_0-A\geq0$, with $F$ exceptional. Fix an ample divisor $H$ on $W$, and choose rational $\delta>0$ such that $A-\delta f^*H$ is ample. For a curve $C\not\subset Z$, its strict transform $\widetilde C$ is not contained in $F$, so
\[
 (A_0-\delta H)C=(A-\delta f^*H)\widetilde C+F\widetilde C\geq0.
\]
The numerical hypothesis extends this inequality to curves in $Z$. Thus $A_0-\delta H$ is nef, and $A_0$ is ample. Expand $A_i=f^*(f_*A_i)-F_i$. Every term containing some $F_i$ is represented by a one-cycle supported in $f^{-1}(Z)$, whose image in $X$ is finite; its pushforward is zero. The projection formula proves the identity. The last assertion is \cref{lem:larger-source}.
\end{proof}

For a blowup of a smooth curve $Z\subset W$, the numerical hypothesis holds if $Z$ deforms to another curve of the same class. Thus some point-fiber refinements can be removed regardless of the genus of their curve centres. The proposition requires the stated numerical hypothesis and does not eliminate arbitrary rigid centres.

\medskip
\noindent\emph{Small modifications.}
We also need the following statement for the minimal-model arguments of \cref{subsec:minimal-models}.

\begin{lemma}\label{lem:small-modifications}
Let $\phi:X\dashrightarrow X'$ be a small projective birational map, where $X$ is smooth and $X'$ is normal $\Q$-factorial. Let $B'$ be an ample $\Q$-Cartier divisor on $X'$. The numerical strict-transform class of $(B')^{n-1}$ on $X$ belongs to $\cA_{\mathrm{sm}}(X)$.
\end{lemma}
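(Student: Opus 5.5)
Since $\phi$ is small, it is an isomorphism between open sets $U\subset X$ and $U'\subset X'$ whose complements have codimension at least two. The plan is to realize $(B')^{n-1}$ by a smooth complete-intersection curve lying inside $U'$ and to transport it to $X$ through this isomorphism.

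First replace $B'$ by a multiple $mB'$ that is very ample and Cartier. This only rescales the class, and $\cA_{\mathrm{sm}}(X)$ is a cone, so nothing is lost. Choose general members $D_1',\dots,D_{n-1}'\in|mB'|$. The indeterminacy locus $X'\setminus U'$ has codimension at least two, so general complete intersections of $n-1$ very ample divisors avoid it: a dimension count shows the intersection meets a closed set of dimension at most $n-2$ in a set of expected dimension $-1$, i.e. it is empty. The singular locus of $X'$ also has codimension at least two, since $X'$ is normal, so the curve avoids it as well. Bertini then gives a smooth irreducible curve $C'=D_1'\cap\cdots\cap D_{n-1}'\subset U'\cap X'_{\rm reg}$, with
\[
N_{C'/X'}\simeq\bigoplus_{i=1}^{n-1}\cO_{C'}(mB'),
\]
which is ample.

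Let $C=\phi^{-1}(C')\subset U\subset X$. Because $\phi|_U$ is an isomorphism onto the open set $U'$, $C$ is a smooth irreducible closed curve in $X$ and $N_{C/X}\simeq N_{C'/X'}$, so $N_{C/X}$ is ample. It remains to identify $[C]$ with the numerical strict-transform class of $(mB')^{n-1}$. Strict transform of divisors identifies $N^1(X)$ with $N^1(X')$ because $\phi$ is small, and the strict-transform class of a curve class is defined by duality. Take any Cartier divisor $D$ on $X$ with strict transform $D'=\phi_*D$ ($\Q$-Cartier). Then $D\cdot C=D'\cdot C'$, since $C$ lies in $U$ and $D|_U$ corresponds to $D'|_{U'}$ (after clearing denominators). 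Hence $[C]=m^{n-1}\,\phi^{-1}_*(B')^{n-1}$, which lies in $\Psm(X)\subseteq\cA_{\mathrm{sm}}(X)$.

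The main obstacle is to make the meaning of "numerical strict-transform class" precise and check that it matches the intersection numbers above. The issue is that $\phi$ has no pushforward on $1$-cycles in general, so the class has to be defined through the duality $N_1(X)\simeq N^1(X)^\vee\simeq N^1(X')^\vee\simeq N_1(X')$, and one has to verify that this duality agrees with computing on curves contained in $U$. The avoidance of a codimension-two locus by general complete-intersection curves is standard and causes no difficulty.
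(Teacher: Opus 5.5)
Your proposal is correct and follows the same route as the paper. A general complete intersection of $n-1$ members of $|mB'|$ avoids the codimension-two complement of $U'$, is smooth with normal bundle $\cO_{C'}(mB')^{\oplus(n-1)}$, and transports isomorphically into $U\subset X$, after which you divide by $m^{n-1}$. Your check that $D\cdot C=\phi_*D\cdot C'$ for Cartier $D$ on $X$ spells out what the paper leaves implicit in the phrase ``numerical strict-transform class''. Your separate step avoiding $\operatorname{Sing}X'$ is harmless but unnecessary, since $U'\simeq U$ is already smooth.
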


\begin{proof}
The map is an isomorphism $U\simeq U'$ with complements of codimension at least two. For $m$ sufficiently divisible, a general complete intersection of $n-1$ members of $|mB'|$ avoids $X'\setminus U'$. It is smooth and irreducible by Bertini on the smooth open set $U'$. Its image in $U\subset X$ is a smooth embedded curve with normal bundle $\cO_{C'}(mB')^{\oplus(n-1)}$, hence ample. Dividing its class by $m^{n-1}$ gives the desired class.
\end{proof}

\section{Cone equalities for special varieties and blowups}\label{sec:positive-cases}
We apply the preceding criteria to obtain cone equalities for special classes of varieties. We then compute movable cones for certain blowups and construct smooth curves with ample normal bundles in their exceptional divisors.

\subsection{Minimal-model criteria}\label{subsec:minimal-models}

Case~(i) below is a direct consequence of Fulger--Lehmann's construction \cite[Example~5.11]{FL17}; case~(ii) applies the minimal-model theory of \cite{BCHM10} to the same geometric criterion.

\begin{theorem}
Let $X$ be a smooth projective variety. If either
\begin{enumerate}
\item $X$ is a Mori dream space, or
\item there is an effective $\Q$-divisor $\Delta_0$ such that $(X,\Delta_0)$ is klt and $-(K_X+\Delta_0)$ is nef,
\end{enumerate}
then
\[
 \cA_{\mathrm{sm}}(X)=\Mov(X).
\]
The first case includes smooth projective toric varieties. The second includes the cases $-K_X$ nef and, in particular, $K_X\equiv0$.
\end{theorem}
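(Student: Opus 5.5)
The inclusion $\cA_{\mathrm{sm}}(X)\subseteq\Mov(X)$ holds for any $X$. A smooth curve with ample normal bundle has ample coherent normal sheaf, so $\cA_{\mathrm{sm}}(X)\subseteq\cA_{\mathrm{sh}}(X)$, and \cref{thm:sheaf-cone} gives $\cA_{\mathrm{sh}}(X)=\Mov(X)$. The work is therefore in the reverse inclusion. I would prove it by duality: since $\cA_{\mathrm{sm}}(X)$ is a closed convex cone, it suffices to show that its dual is contained in $\Eff(X)$. Equivalently, any divisor class $D$ with $D\cdot\gamma\ge0$ for all $\gamma\in\cA_{\mathrm{sm}}(X)$ must be pseudoeffective, and then BDPP duality gives $\Mov(X)\subseteq\cA_{\mathrm{sm}}(X)$.

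In both cases I would use a finite decomposition of the effective cone into Mori chambers. In case~(i) this comes from the Mori dream space structure. Every effective divisor $D$ has a birational contraction $\phi_D:X\dashrightarrow X'$ obtained by running the $D$-MMP: a sequence of flips followed by divisorial contractions. The small $\Q$-factorial modifications $\phi_j:X\dashrightarrow X_j$ are finite in number, and $\Eff(X)=\bigcup_j\phi_j^*\Nef(X_j)$, with each $X_j$ a Mori dream space. For each $j$ and each ample $B'$ on $X_j$, \cref{lem:small-modifications} puts the strict-transform class of $(B')^{n-1}$ in $\cA_{\mathrm{sm}}(X)$; by closedness the same holds for nef $B'$. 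For the dual statement, take $D$ that is nonnegative on all these classes and suppose $D$ is not pseudoeffective. The cone $\Eff(X)$ is rational polyhedral, and a boundary divisor $P$ of $\Eff(X)$ "closest" to $D$ gives a fibration-type contraction on some $X_j$. So $P=\phi_j^*B$ with $B$ nef but not big on $X_j$, and $B$ defines a contraction $X_j\to W$ with $\dim W<n$. The general fibre $F$ of this contraction is a movable family. Its covering curves are limits of complete intersections $(B+\epsilon A)^{n-1}$ with $A$ ample, and these should pair negatively with $D$. This is the Fulger--Lehmann argument of \cite[Example~5.11]{FL17}. More cleanly, I would cite that $\Mov(X)$ is generated by the classes $\phi_j^{-1}{}_*(B_j^{n-1})$ with $B_j$ nef on $X_j$, which is the dual of the chamber decomposition. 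A smooth complete-intersection curve on $X_j$ avoids the codimension-two indeterminacy locus and gives a smooth curve in $X$ with ample normal bundle.

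For case~(ii) I would reduce to the same chamber structure using BCHM. Take $D$ big. Then $(X,\Delta_0+\epsilon D)$ is klt for small $\epsilon$, and $K_X+\Delta_0+\epsilon D$ is big because $-(K_X+\Delta_0)$ is nef. Then BCHM gives a minimal model. More importantly, it gives finiteness of models for the polytope of boundaries $\Delta_0+\sum t_iD_i$ with $D_i$ ample or big generators. Concretely, fix an ample $A$. For any big $D$, the divisor $D\sim_{\Q}K_X+\Delta$ with $\Delta=\Delta_0+(D-K_X-\Delta_0)$, where $D-(K_X+\Delta_0)$ is big and so can be represented by a small klt perturbation. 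Thus $(X,\Delta)$ is klt with $K_X+\Delta\equiv D$, and $X$ behaves like a Mori dream space on the big cone (the standard corollary that log Fano type, and more generally this nef-anticanonical setting, yields finitely generated Cox-type chamber decompositions on $\operatorname{Big}(X)$). The $D$-MMP terminates in a model $X\dashrightarrow X_D$ whose non-small part contracts only divisors in $\mathbf B_-(D)$, and whose small part factors through a $\Q$-factorial model where $D$ is nef. The same argument as before then expresses movable classes as limits of $\phi_*^{-1}(B^{n-1})$ with $B$ ample on small models. Via \cref{lem:small-modifications}, these lie in $\cA_{\mathrm{sm}}(X)$.

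The main obstacle is the duality step in case~(ii). One must show that the strict transforms of $B^{n-1}$ on small $\Q$-factorial models, with $B$ ample, actually generate $\Mov(X)$. This requires handling the boundary of $\Eff(X)$, where the finiteness from BCHM only holds locally on the big cone. I would try to show that for a non-pseudoeffective $D$, some class $B_j^{n-1}$ with $B_j$ ample on a model $X_j$ is negative on $D$. To do this I would run a $(K_X+\Delta)$-MMP with scaling, where $K_X+\Delta\equiv D$, which terminates in a Mori fibre space, and use covering curves of that fibration lifted from complete intersections on the last small model before the divisorial contractions.
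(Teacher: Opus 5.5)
\textbf{Case (i).} Here your argument ends where the paper's does. The paper cites \cite[Example~5.11]{FL17} for generation of $\Mov(X)$ by limits of ample complete intersections on small $\Q$-factorial modifications, then applies \cref{lem:small-modifications}. You should discard the duality sketch that precedes your citation, because it rests on a false identity. The union $\bigcup_j\phi_j^*\Nef(X_j)$ is the closed movable \emph{divisor} cone, not $\Eff(X)$. For example, on $\Bl_p\Pj^2$ the exceptional curve spans a boundary ray of $\Eff$ that is nef on no small model. So a boundary divisor of $\Eff(X)$ need not have the form $\phi_j^*B$ with $B$ nef.

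\textbf{Case (ii): the genuine gap.} For the step you call the main obstacle, you plan to write a non-pseudoeffective $D$ as $K_X+\Delta$ with $(X,\Delta)$ klt and run an MMP to a Mori fibre space. This is impossible in general. Put $N=-(K_X+\Delta_0)$. Even after rescaling, you need an effective $\Delta\equiv\varepsilon D+N$, and when $N$ is not big this class can fail to be pseudoeffective. For $K_X\equiv0$ and $\Delta_0=0$, it is never pseudoeffective when $D$ is not. Your preliminary claims also fail:
\begin{itemize}
\item $K_X+\Delta_0+\epsilon D=\epsilon D-N$ is not big in general; it is anti-ample on $\Pj^n$ with $D$ a hyperplane.
\item Bigness of $D+N$ does not give a klt representative at scale one, since the effective part of a big divisor can have large fixed coefficients. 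You must scale.
\end{itemize}

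\textbf{The missing idea.} You never need the boundary of $\Eff(X)$ or non-pseudoeffective classes at all. $\Mov(X)$ is the closed cone generated by positive products $\langle D^{n-1}\rangle$ of big movable rational divisors, which the paper takes from \cite[Example~5.11]{FL17}. The paper then argues as follows.
\begin{itemize}
\item Write $D\sim_\Q A+E$ and put $\Theta=\Delta_0+\varepsilon E+G$ with $G\sim_\Q N+\varepsilon A$ general. Then $\Theta$ is klt and big, and $K_X+\Theta\sim_\Q\varepsilon D$.
\item BCHM gives an MMP with scaling ending in a $\Q$-factorial model $X'$ on which $D'$ is nef.
\item Movability of $D$ rules out divisorial contractions: a ray contracting a divisor $F$ is nonnegative on every movable class, but $D$ is negative on it. Hence $\phi:X\dashrightarrow X'$ is small.
\item Then $\langle D^{n-1}\rangle=\lim_{t\downarrow0}\phi^{-1}_*[(D'+tH')^{n-1}]$, and each term lies in $\cA_{\mathrm{sm}}(X)$ by \cref{lem:small-modifications}.
\end{itemize}
This replaces your duality argument entirely.
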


\begin{proof}
\noindent\emph{Case (i): $X$ is a Mori dream space.}
Fulger--Lehmann \cite[Example~5.11]{FL17} show that the movable curve cone is generated, after taking closure, by positive products on projective small $\Q$-factorial modifications $X'$. Their proof realizes the relevant products as limits of ample complete intersections in the common isomorphism locus. By \cref{lem:small-modifications}, every such approximating class belongs to $\cA_{\mathrm{sm}}(X)$. This proves $\Mov(X)\subseteq\cA_{\mathrm{sm}}(X)$; the reverse inclusion follows from \cite[Theorem~2.4]{BDPP13}.

\medskip
\noindent\emph{Case (ii): $-(K_X+\Delta_0)$ is nef.}
It is enough to treat positive products $\langle D^{n-1}\rangle$ of big movable rational divisors $D$: these generate the movable curve cone after closure \cite[Example~5.11]{FL17}. Choose $D\sim_\Q A+E$, with $A$ ample and $E\geq0$. Write $N=-(K_X+\Delta_0)$ and choose $\varepsilon>0$ sufficiently small so that $(X,\Delta_0+\varepsilon E)$ is klt. The class $N+\varepsilon A$ is ample. A general small-coefficient effective
representative $G\sim_\Q N+\varepsilon A$ can be chosen so that
\[
\Theta=\Delta_0+\varepsilon E+G\quad\text{is klt},\qquad K_X+\Theta\sim_\Q\varepsilon D.
\]
The boundary $\Theta$ is big. By BCHM \cite{BCHM10}, an MMP with scaling for $(X,\Theta)$ terminates in a nef model for $D$.

This program has no divisorial contractions. Indeed, suppose that a step contracted a prime divisor $F$. Its contracted curves cover $F$ and belong to a single extremal ray $R$, on which the current transform of $D$ has negative intersection. For every effective movable rational divisor $M$, a suitable multiple has an effective representative not containing $F$. A general contracted curve in a family covering $F$ is not contained in that representative, so $M$ is nonnegative on $R$. By linearity and continuity, the same holds for every class in the closed movable divisor cone, contradicting the negativity of the current transform of $D$ on $R$. Small modifications preserve this closed movable cone, so the argument applies at every step. The resulting map $X\dashrightarrow X'$ is therefore small, and the transform $D'$ is nef and big.

Choose an ample divisor $H'$ on $X'$ and let $t\downarrow0$ through positive rationals. Then $D'+tH'$ is ample, so the strict-transform classes of $(D'+tH')^{n-1}$ belong to $\cA_{\mathrm{sm}}(X)$ by \cref{lem:small-modifications}. The birational invariance of the positive product on a small $D$-nef model gives
\[
\langle D^{n-1}\rangle=\lim_{t\downarrow0}\phi_*^{-1}[(D'+tH')^{n-1}].
\]
Taking closed cones proves the assertion.
\end{proof}

\subsection{Complete-intersection blowups and nef products}

We construct smooth curves inside the exceptional divisor of a blowup and use their numerical classes to determine its movable cone. Write $\CI(Y)$ for the closed cone generated by products of $n-1$
nef divisor classes on a smooth $n$-fold. General ample complete intersections and \cite[Theorem~2.4]{BDPP13} give
\begin{equation}
 \CI(Y)\subseteq\cA_{\mathrm{sm}}(Y)\subseteq\Mov(Y).
\end{equation}

\begin{theorem}\label{thm:ci-all-codim}
Let $X$ be a smooth projective $n$-fold with $\rho(X)=1$, and let $L$ be an ample Cartier divisor. Suppose that $Z$ is a smooth connected positive-dimensional scheme-theoretic complete intersection of $r$ ample Cartier divisors $D_1,\ldots,D_r$, where
$2\leq r\leq n-1$. Write
\[
 D_i\equiv a_iL,\qquad 0<a_1\leq\cdots\leq a_r.
\]
Let $\pi:Y=\operatorname{Bl}_ZX\to X$, put $H=\pi^*L$, and let $E$ be its exceptional divisor. In the curve coordinates $h=H\cdot\alpha$ and $e=E\cdot\alpha$, one has
\begin{align}
\cA_{\mathrm{sm}}(Y)=\Mov(Y)&=\{(h,e):h\geq0,\ 0\leq e\leq a_1h\},\label{eq:ci-all-wedge}\\
\Eff^{\,1}(Y)&=\operatorname{cone}(E,a_1H-E).\label{eq:ci-all-effective}
\end{align}
Every rational interior ray is represented by a smooth irreducible curve with ample normal bundle. Every movable class, including a boundary class, is a limit of positive rescalings of individual such curves. No smoothness assumption on the partial
intersections of the $D_i$ is required.
\end{theorem}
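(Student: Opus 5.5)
The plan is to work in $N_1(Y)$, which is two-dimensional because $\rho(X)=1$ and the blowup adds one class. I would prove $\Mov(Y)\subseteq W:=\{h\ge0,\ 0\le e\le a_1h\}$ using effective divisors, and then prove $W\subseteq\Psm(Y)$ by building smooth curves with ample normal bundle inside $E$ and away from $E$. Since $\Asm(Y)\subseteq\Mov(Y)$ by \cite[Theorem~2.4]{BDPP13}, these two inclusions give \eqref{eq:ci-all-wedge}. Then \eqref{eq:ci-all-effective} follows by BDPP duality, because the dual of $W$ is $\operatorname{cone}(E,a_1H-E)$.

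\emph{Upper bound.} The divisor $E$ is effective, so $e\ge0$ on movable classes. The $D_i$ generate $\cI_Z$ and $Z$ is smooth of codimension $r$. Hence the differentials $dD_i$ span the conormal bundle, so each $D_i$ is smooth at the points of $Z$ and has multiplicity exactly one along $Z$. Therefore the strict transform $\widetilde D_1\equiv a_1H-E$ is an effective divisor. Pairing movable classes with $E$ and with $\widetilde D_1$ gives $\Mov(Y)\subseteq W$; the inequality $h\ge0$ is implied by the other two.

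\emph{Constructions.} For the ray $e=0$, take a general complete intersection of $n-1$ members of $|mL|$ in $X$. Since $\operatorname{codim}Z\ge2$, it avoids $Z$, and its normal bundle $\cO(mL)^{\oplus(n-1)}$ is ample. For $0<e<a_1h$ I would work inside $E=\mathbb P(N_{Z/X})$, where $N_{Z/X}=\bigoplus_i\cO_Z(D_i)$, using the lines convention so that $\cO_E(E)=\cO(-1)$.
\begin{itemize}
\item Take a smooth complete-intersection curve $B\subset Z$ of $|dL|_Z|$, with $d\gg0$, so that $N_{B/Z}$ is ample.
\item Choose a line bundle $M$ on $B$ and a nowhere-vanishing subbundle $M\hookrightarrow N|_B$. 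This exists for a general map once $\deg M<a_1\deg_B L$ by a margin growing with $d$, because $r\ge2$ and $N|_B\otimes M^{-1}$ is then globally generated with rank $r\ge2$.
\item The corresponding section $\Gamma\subset E$ over $B$ satisfies $e=E\cdot\Gamma=\deg M$ and $h=\deg_BL$.
\item Letting $\deg M/\deg_BL$ range over rationals in $(0,a_1)$ and rescaling realizes every rational interior ray.
\end{itemize}

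\emph{Ampleness of $N_{\Gamma/Y}$ (main obstacle).} I would use two extensions:
\[
0\to N_{\Gamma/E}\to N_{\Gamma/Y}\to\cO_\Gamma(E)\to0,
\qquad
0\to T_{E/Z}|_\Gamma\to N_{\Gamma/E}\to N_{B/Z}\to0 .
\]
Here $\cO_\Gamma(E)$ has degree $e>0$, and $N_{B/Z}$ is ample. The relative tangent term is $T_{E/Z}|_\Gamma=\mathcal Hom(M,N|_B/M)$. The quotient $N|_B/M$ is a quotient of $\bigoplus\cO_B(D_i)$, so $\mu_{\min}(N|_B/M)\ge a_1\deg_BL$. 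Hence $\mu_{\min}(T_{E/Z}|_\Gamma)\ge a_1h-e>0$, and $T_{E/Z}|_\Gamma$ is ample by the Hartshorne criterion used in \cref{lem:torsion}. Extensions of ample bundles on a curve are ample, so $N_{\Gamma/Y}$ is ample. This step uses $e<a_1h$ strictly, and it is why the general choice of $M$ matters: the obvious section coming from $\cO(D_1)$ gives $e=a_1h$ but fails ampleness when $a_2=a_1$.

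\emph{Closure.} Every interior rational ray is then represented by an individual smooth curve with ample normal bundle. Boundary rays are limits of such rays, so $W\subseteq\Psm(Y)$, and its convex hull gives $W\subseteq\Asm(Y)$. No smoothness of partial intersections was used: only the equality $N_{Z/X}=\bigoplus\cO_Z(D_i)$ and the multiplicity-one property of $D_1$ along $Z$.
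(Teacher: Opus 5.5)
Your upper bound, the duality step, and the ampleness check for a \emph{section} $\Gamma$ (via $\mu_{\min}\Hom(M,N|_B/M)\ge a_1h-e$) are fine. The gap is that sections of $E|_B\to B$ cannot produce the required slopes. Take $X=\Pj^3$ and $Z$ a line, so $a_1=a_2=1$ and $B=Z$ is forced. Every section then has $h=1$ and integral $e$, so none lies in the open wedge $0<e<h$, and your construction yields no interior class at all.

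Allowing $\dim Z\ge2$ does not rescue this. Let $Z=\Pj^2\subset\Pj^4$ be cut out by two hyperplanes, and let $B$ be a smooth plane curve of degree $d$. A line subbundle $M\subset\cO_B(1)^{\oplus2}$ has the form $\cO_B(1)\otimes\phi^*\cO_{\Pj^1}(-1)$ with $\phi\colon B\to\Pj^1$ of degree $k$, so $e/h=1-k/d$. Since $B$ has gonality $d-1$, the only interior slopes are $1/d$ with $d\ge2$, and the closed cone you generate stops at $e\le h/2$.

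The existence claim is also miscalibrated. Global generation of $\cO_B(D_i)\otimes M^{-1}$ is guaranteed only in degree $\ge2g_B$. For complete-intersection curves $B\subset Z$ of $|dL|_Z|$, the ratio $g_B/\deg_BL$ grows linearly in $d$, so for $d\gg0$ this forces $\deg M<0$. Moreover, for fixed $B$ the ratio $\deg M/\deg_BL$ lies in a discrete set. So you cannot let it ``range over rationals in $(0,a_1)$'', and the assertion that every rational interior ray is represented fails.

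The missing idea is to use multisections of arbitrary degree $k$ over a fixed $B$. The paper works in the ruled surface $R=\Pj_B(L_1^{-1}\oplus L_2^{-1})\subset E_B$ spanned by the two summands of lowest degree. There it uses the nef, relatively ample class $Q=\xi+p^*c_1(L_2)$ and takes a general smooth $C_\varepsilon\in|k(Q+\varepsilon H_B)|$ for rational $0<\varepsilon<a_1$. This curve is a degree-$k$ cover of $B$ with $h=kh_B$ and $e=k(a_1-\varepsilon)h_B$, so every rational slope in $(0,a_1)$ occurs without changing $B$; in the $\Pj^3$ example these are the curves of bidegree $(k,k\varepsilon)$ on $E\cong\Pj^1\times\Pj^1$.

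Ampleness of $N_{C_\varepsilon/Y}$ then follows along the chain $C_\varepsilon\subset R\subset E_B\subset E\subset Y$:
\begin{itemize}
\item $N_{C_\varepsilon/R}$ has degree $k^2A_\varepsilon^2>0$;
\item each summand $\cO_R(1)\otimes p^*L_j$ of $N_{R/E_B}$ has degree $k(a_j-a_1+\varepsilon)h_B>0$ on $C_\varepsilon$;
\item the last two pieces are handled as in your argument.
\end{itemize}
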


\begin{proof}
Put $m=\dim Z$ and $L_i=\cO_Z(D_i)$. The normal bundle splits as $N_{Z/X}=\bigoplus_{i=1}^rL_i$. With the quotient convention,
\[
E=\mathbb P_Z(N_{Z/X}^*),\qquad\cO_Y(E)|_E=\cO_E(-1).
\]
If $m=1$, set $B=Z$. If $m\geq2$, take a general smooth complete-intersection curve $B\subset Z$ of sufficiently ample divisors. Then $N_{B/Z}$ is a direct sum of ample line bundles. Write $L_i$ also for their restrictions to $B$, and set $h_B=\deg(L|_B)>0$.

Inside $E_B=E\times_ZB$ consider the ruled surface
\[
 R=\mathbb P_B(L_1^{-1}\oplus L_2^{-1})\subset E_B.
\]
This is the subbundle of quotients that kill the remaining summands. Let $p:R\to B$, $\xi=c_1(\cO_R(1))$, and $H_B=p^*c_1(L|_B)$. The divisor
\[
 Q=\xi+p^*c_1(L_2)
\]
is nef: it is the tautological class of $(L_2\otimes L_1^{-1})\oplus\cO_B$, whose summands have nonnegative degree. It is relatively ample over $B$. For rational $0<\varepsilon<a_1$, the divisor $A_\varepsilon=Q+\varepsilon H_B$ is ample. Indeed it has positive degree on every curve, and
\[
 A_\varepsilon^2=(a_2-a_1+2\varepsilon)h_B>0.
\]
Here we used $\xi^2=-(a_1+a_2)h_B$ and $\xi H_B=h_B$.

For sufficiently divisible $k$, choose a general smooth irreducible $C_\varepsilon\in|kA_\varepsilon|$. Its coordinates in $Y$ are
\begin{equation}\label{eq:ruled-slope}
 h=kh_B,\qquad e=-\xi C_\varepsilon=k(a_1-\varepsilon)h_B.
\end{equation}
The curve maps finitely onto $B$, with degree $k$. We verify the normal bundles along
\[
 C_\varepsilon\subset R\subset E_B\subset E\subset Y.
\]
First $N_{C_\varepsilon/R}$ has positive degree $k^2A_\varepsilon^2$. Next,
\[
 N_{R/E_B}=\bigoplus_{j=3}^r\bigl(\cO_R(1)\otimes p^*L_j\bigr).
\]
Each summand has degree $(a_j-a_1+\varepsilon)h>0$ on $C_\varepsilon$. The restriction of $N_{E_B/E}$ is the finite pullback of $N_{B/Z}$, hence ample. Finally $N_{E/Y}|_{C_\varepsilon}$ has degree $e>0$. Omit the zero-rank terms when $r=2$ or $m=1$.
Successive normal exact sequences, and stability of ampleness under extensions on a curve, prove that $N_{C_\varepsilon/Y}$ is ample.

Since $Z$ is smooth and connected, $N^1(Y)_\R=\R H\oplus\R E$. Both $E$ and the effective strict transform $\pi^*D_1-E\equiv a_1H-E$ pair nonnegatively with movable
classes. They constrain the movable cone to lie in the wedge in \eqref{eq:ci-all-wedge}. Equation~\eqref{eq:ruled-slope} supplies every rational interior ray of this wedge. Its closure therefore
equals both cones in \eqref{eq:ci-all-wedge}, and duality gives \eqref{eq:ci-all-effective}. Approximating any desired slope and rescaling the $h$-coordinate gives the assertion about individual curves, also on the two boundary rays. The zero class is obtained by rescaling any fixed curve to zero.
\end{proof}

\begin{remark}
The proof only uses a splitting $N_{Z/X}=\bigoplus L_i$ with $L_i\equiv a_iL|_Z$, and an effective divisor numerically $a_1L$ containing $Z$ to order one. Under these hypotheses the same conclusion holds without a global complete-intersection presentation of $Z$.
\end{remark}

\medskip
\noindent\emph{A codimension-two refinement.}
The next proposition strengthens the cone computation in \cref{thm:ci-all-codim} when the defining divisors meet transversely.

\begin{proposition}\label{prop:ci-two-cones}
In \cref{thm:ci-all-codim}, suppose $r=2$ and $D_1,D_2$ are smooth and transverse. Set $a=a_1$, $b=a_2$ and $N=\pi^*D_2-E$. Then
\[
 \Nef(Y)=\operatorname{cone}(H,bH-E),\qquad\CI(Y)=\cA_{\mathrm{sm}}(Y)=\Mov(Y).
\]
Every nonzero interior movable class is $P^{n-1}$ for an ample real divisor $P$. Every movable class is a limit of rescaled individual smooth ample-normal complete-intersection curves.
\end{proposition}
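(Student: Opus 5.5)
The plan is to compute $\Nef(Y)$ directly and then show that the self-intersections $P^{n-1}$, with $P$ nef, sweep out the whole wedge \eqref{eq:ci-all-wedge}. Since $\CI(Y)\subseteq\Asm(Y)\subseteq\Mov(Y)$ and \cref{thm:ci-all-codim} already identifies $\Mov(Y)$ with that wedge, this will give all three equalities.

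\emph{Step 1: the nef cone.} The class $H$ is nef. For $N=\pi^*D_2-E\equiv bH-E$, observe that $N$ is the strict transform $\widetilde D_2$. Since $D_1,D_2$ are smooth and transverse, $Z$ is a Cartier divisor on the smooth variety $D_2$. Hence $\widetilde D_2\simeq D_2$ and
\[
 N|_{\widetilde D_2}\equiv D_2|_{D_2}-Z\equiv (b-a)L|_{D_2},
\]
which is nef. On $E=\mathbb P_Z(L_1^{-1}\oplus L_2^{-1})$, the restriction $N|_E=\xi+p^*c_1(L_2)$ is the tautological class of $(L_2\otimes L_1^{-1})\oplus\cO_Z$, hence nef because $b\ge a$. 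A curve not contained in $\widetilde D_2\cup E$ meets the effective divisor $N$ nonnegatively, so $N$ is nef.
\begin{itemize}
\item The fibre lines of $E\to Z$ have $H=0$, so $H$ spans a boundary ray.
\item The section of $E|_B\to B$ given by the quotient onto $L_2$ satisfies $N\cdot(\text{section})=0$, so $bH-E$ spans the other boundary ray.
\end{itemize}
Since $N^1(Y)_\R$ has rank two, this gives $\Nef(Y)=\operatorname{cone}(H,bH-E)$.

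\emph{Step 2: intersection numbers along the segment.} For $P_t=(1-t)H+tN$ with $t\in[0,1]$, compute $h(t)=H\cdot P_t^{n-1}$ and $e(t)=E\cdot P_t^{n-1}$, using $E=bH-N$ and restriction to $\widetilde D_2$. At $t=0$ we get $e=0$ and $h=L^n>0$. At $t=1$ the computation gives
\[
 H\cdot N^{n-1}=b(b-a)^{n-2}L^n,\qquad N^n=b(b-a)^{n-1}L^n,
\]
so $e=b\,H\cdot N^{n-1}-N^n=ab(b-a)^{n-2}L^n$, and the slope $e/h$ equals $a$. For $0<t<1$ the class $P_t$ is ample, so $h(t)>0$ and the slope $s(t)=e(t)/h(t)$ is continuous. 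By \cref{thm:ci-all-codim} each $P_t^{n-1}$ is movable, so $s(t)\in[0,a]$.

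\emph{Step 3: conclusion.} By the intermediate value theorem, $s$ attains every value in $(0,a)$ on $(0,1)$. Hence every interior ray of the wedge is spanned by $P^{n-1}$ with $P$ ample real. The closure of the set of such products then gives $\CI(Y)\supseteq\Mov(Y)$, and therefore $\CI(Y)=\Asm(Y)=\Mov(Y)$. For individual curves, approximate $P$ by rational ample classes, take very ample multiples, and use general smooth complete-intersection curves, whose normal bundles are direct sums of ample line bundles. Rescaling and closedness then handle irrational interior classes and the two boundary rays.

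\emph{Expected main obstacle.} The delicate point is the endpoint $t=1$ when $a=b$ and $n\ge3$. There $N^{n-1}=0$, so $h(1)=0$ and the slope is undefined at the endpoint. I would first try to expand $h(t)$ and $e(t)$ as polynomials in $t$ and compare their leading terms as $t\to1$, in order to show $s(t)\to a$. If that comparison is awkward, the fallback is a direct argument: as $t\to 1$, the ratio $e/h$ is governed by the mixed term $H\cdot N^{n-2}\cdot E$ versus $H^2\cdot N^{n-2}$. Evaluating these on $\widetilde D_2$ should again yield slope $a$ in the limit, and together with the bound $s\le a$ from Step 2 this gives surjectivity onto $(0,a)$.
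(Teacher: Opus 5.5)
Your argument is correct. It uses the same family of ample classes as the paper, namely divisors on the open segment between $H$ and $N$, with the same endpoint intersection numbers. The mechanism differs in two places.

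For nefness, the paper restricts to the other strict transform $F=\pi^*D_1-E\simeq D_1$, on which $N$ is trivial. Then $N\equiv F+(b-a)H$ gives nefness at once, and also the identity $N^2\equiv(b-a)HN$ against any $n-2$ further divisors. That identity collapses $(H+tN)^{n-1}$ to $\gamma_0+f_c(t)\gamma_1$, where $\gamma_0=H^{n-1}$, $\gamma_1=H^{n-2}N$, and $f_c$ is an explicit increasing bijection of $(0,\infty)$.

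Since $\gamma_0$ and $\gamma_1$ are nef products lying on the two boundary rays $e=0$ and $e=ah$, this gives $\CI(Y)=\Mov(Y)$ immediately. It also gives the $P^{n-1}$ statement with an explicit parameter and no limiting argument, uniformly for $a<b$ and $a=b$. Your route, via continuity of the slope and the intermediate value theorem, needs only the endpoint values. The cost is a separate treatment of the case $a=b$, where $h(1)=0$.

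Your primary plan for that endpoint works. Your own restriction to $\widetilde D_2$ gives $H^{n-k}N^k=(b-a)^{k-1}b\,L^n$ for $k\ge1$. So when $a=b$ only the terms with $k\le1$ survive, and
\[
 h(t)=(1-t)^{n-2}\bigl((1-t)+(n-1)bt\bigr)L^n,\qquad e(t)=(n-1)t(1-t)^{n-2}ab\,L^n,
\]
whence $s(t)\to a$.

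Your fallback, however, fails for $n\ge4$. When $a=b$ one has $N^2\equiv0$, so both $H\cdot N^{n-2}\cdot E$ and $H^2\cdot N^{n-2}$ vanish. The terms that actually govern the limit are $E\cdot H^{n-2}N=ab\,L^n$ and $H^{n-1}N=b\,L^n$.

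Two small points in Step~1:
\begin{itemize}
\item The section with $N$-degree zero is given by the quotient onto $L_2^{-1}$, not onto $L_2$; it is $E\cap\widetilde D_1$.
\item Nefness of $N|_E$ is not needed, since every curve outside $\widetilde D_2$ meets the effective divisor $N$ nonnegatively.
\end{itemize}
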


\begin{proof}
The strict transform $F=\pi^*D_1-E$ is isomorphic to $D_1$, because $Z$ is Cartier on $D_1$. Moreover $E|_F=D_2|_{D_1}$ and $N|_F$ is trivial. Numerically, $N\equiv F+(b-a)H$. Curves outside $F$ have nonnegative $N$-degree, and curves in $F$ have degree zero. Thus $N$ is nef. Testing $xH-yE$ on a ruling of $E$ and a curve in $F$ gives
$y\geq0$ and $x-by\geq0$, which proves the nef-cone formula.

Put $d=L^n$, $c=b-a$, $\gamma_0=H^{n-1}$ and $\gamma_1=H^{n-2}N$. The numerical identity $FN=0$ gives
\[
 (H\gamma_0,E\gamma_0)=(d,0),\qquad (H\gamma_1,E\gamma_1)=(bd,abd),\qquad N^2=cHN.
\]
The two nef products generate the entire movable wedge, proving the cone equality. Furthermore
\begin{equation}\label{eq:ci-power-parametrization}
(H+tN)^{n-1}=\gamma_0+f_c(t)\gamma_1,\quad f_c(t)=
 \begin{cases}
 ((1+ct)^{n-1}-1)/c,&c>0,\\
 (n-1)t,&c=0.
 \end{cases}
\end{equation}
As $t$ ranges over $(0,+\infty)$, so does $f_c(t)$. The divisor $H+tN$ is ample for every $t>0$: relative ampleness gives this for small positive $t$, and adding nef multiples of
$N$ gives all larger $t$. Equation~\eqref{eq:ci-power-parametrization}, together with positive scaling, therefore expresses every interior class as the $(n-1)$st power of an ample real divisor. Approximation by rational ample divisors and Bertini give the individual curves. Taking limits also gives both boundary rays.
\end{proof}

\begin{example}[A nef boundary without a semiample representative]

Let $X$ be an abelian threefold with $\rho(X)=1$, let $L$ be sufficiently ample, and take a nontorsion $P\in\operatorname{Pic}^0(X)$. Choose smooth transverse divisors $D_1\in|L|$ and $D_2\in|L\otimes P|$, with smooth connected intersection $Z$. On $Y=\operatorname{Bl}_ZX$, the nef ray $H-E$ has no semiample line-bundle representative, although \cref{prop:ci-two-cones} applies.

Indeed, let $F_i$ be the disjoint strict transforms and put $N=\pi^*(L\otimes P)\otimes\cO_Y(-E)$. Then
\[
 N|_{F_1}\simeq\cO_{F_1},\qquad N|_{F_2}\simeq P|_{D_2}.
\]
Every line bundle on the positive numerical ray of $N$ has the form $N^{\otimes m}\otimes\pi^*Q$ with $m\in\Z_{>0}$ and $Q\in\operatorname{Pic}^0(X)$: the exceptional coefficient forces $m$ to be integral, and the N\'eron--Severi group of an abelian
variety is torsion-free. A semiample numerically trivial line bundle is torsion. Restriction to $F_1$, and the Lefschetz isomorphism $\operatorname{Pic}^0(X)\simeq\operatorname{Pic}^0(D_1)$, would therefore force $Q$ to be torsion. Restriction to $F_2$ would then be nontorsion and semiample, a contradiction. Thus the proof of the cone equality does not require semiampleness on the nef boundary.
\end{example}

\subsection{Curve centres beyond complete intersections}

\begin{theorem}\label{thm:non-ci-centre}
Let $X$ be a smooth projective threefold with $\rho(X)=1$, let $L$ be an ample Cartier divisor, and let $D\equiv aL$ be an effective Cartier divisor. Suppose $Z\subset D$ is a smooth connected curve and $D$ is smooth along $Z$. If
\begin{equation}\label{eq:normal-degree-criterion}
 l:=\deg N_{Z/D}\ \geq\ m:=D\cdot Z>0,
\end{equation}
then, for $Y=\operatorname{Bl}_ZX$ and the coordinates $h=(\pi^*L)\cdot\alpha$, $e=E\cdot\alpha$,
\[
\cA_{\mathrm{sm}}(Y)=\Mov(Y)=\{(h,e):h\geq0,\ 0\leq e\leq ah\}.
\]
Every rational interior ray has an individual smooth ample-normal representative, and every movable class is a limit of rescaled individual such curves. If $D$ is smooth, the first degree in \eqref{eq:normal-degree-criterion} is $Z_D^2$. In particular, if $D$ is a smooth ample surface and $A$ is an ample divisor on $D$, then for all sufficiently large integers $s$ the blowup along a general smooth $Z\in|sA|$ satisfies the same cone equality and approximation by individual curves.
\end{theorem}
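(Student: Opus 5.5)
The argument follows the proof of \cref{thm:ci-all-codim}, with the splitting of $N_{Z/X}$ replaced by the normal sequence of $Z\subset D\subset X$. Since $D$ is smooth along $Z$, there is an exact sequence of line bundles on $Z$:
\[
 0\longrightarrow N_{Z/D}\longrightarrow N_{Z/X}\longrightarrow \cO_Z(D)\longrightarrow0,
\]
with degrees $l$ and $m$. With the quotient convention, $E=\mathbb P_Z(N_{Z/X}^*)$ and $\cO_Y(E)|_E=\cO_E(-1)$. The quotient $N_{Z/X}^*\to N_{Z/D}^*$ defines a section $\sigma\subset E$. This section is the intersection of $E$ with the strict transform $\widetilde D$. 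Write $\xi=c_1(\cO_E(1))$ and let $f$ be a fibre of $E\to Z$. Then $\xi^2=-\deg N_{Z/X}=-(l+m)$, and $E\cdot\sigma=-\xi\cdot\sigma=-\deg N_{Z/D}^*=l$.

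\emph{Upper bound.} The strict transform $\widetilde D\equiv a\pi^*L-E$ is effective because $D$ has multiplicity one along $Z$. Together with $E\ge0$, this forces every movable class into the wedge $h\ge0$, $0\le e\le ah$, using the pairing from \cite[Theorem~2.4]{BDPP13}. Since $\rho(X)=1$ and $Z$ is smooth and connected, $N^1(Y)_\R=\R H\oplus\R E$, so the wedge is described fully by these two coordinates.

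\emph{Lower bound.} It suffices to realise every rational interior slope $0<e/h<a$ by smooth irreducible curves $C\subset E$ with $N_{C/Y}$ ample. After that, closedness and rescaling give both cone equalities and the approximation statements, exactly as before. Use the divisor $Q=\xi+c\,f$ on $E$. Its ampleness is the key point. Since $\cO_E(1)$ is the tautological bundle of $N_{Z/X}^*$, we have $\cO_E(-\sigma)\cong\cO_E(1)\otimes\pi_E^*N_{Z/D}^{-1}$. Hence $-\xi\equiv\sigma-l f$, so $\sigma\equiv -\xi+lf$. On the ruled surface $E$ the negative section class has self-intersection $-(l-m)$, because $\sigma^2=\xi^2+2l=l-m$. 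The extension shows that $N_{Z/X}$ has minimal quotient slope $\ge\min(l,m)=m$ when $l\ge m$. This is where \eqref{eq:normal-degree-criterion} enters: the bundle $N_{Z/X}\otimes\cO_Z(-D)$, an extension of $\cO_Z$ by a line bundle of degree $l-m\ge0$, is then nef. Consequently $Q_0=-\xi\cdot(-1)\ldots$; more precisely, the class $\xi_{\text{taut}}+m f$ of $\cO_{\mathbb P(N_{Z/X}(-D)^*)}$ is nef, and it has zero $E$-degree contribution as in the proof of \cref{thm:ci-all-codim}. Adding $\varepsilon f$ for $0<\varepsilon<a\deg(L|_Z)$ gives an ample class $A_\varepsilon$, by Nakai: the curve degrees are positive and $A_\varepsilon^2=(l-m)+2\varepsilon>0$. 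A general smooth $C_\varepsilon\in|kA_\varepsilon|$ then has $h=k\,L\cdot Z$ and $e=-\xi\cdot C_\varepsilon=k(m-\varepsilon)$. Here $m=aL\cdot Z$, so the slopes $e/h=a-\varepsilon/L\cdot Z$ sweep $(0,a)$. The normal bundle of $C_\varepsilon$ in $Y$ is an extension of $N_{E/Y}|_{C}$, of degree $e>0$, by $N_{C/E}$, of degree $k^2A_\varepsilon^2>0$. So it is ample, since extensions of ample bundles on a curve are ample.

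\emph{Remaining claims.} If $D$ is smooth, adjunction gives $\deg N_{Z/D}=Z_D^2$. For $Z\in|sA|$ we have $l=s^2A^2$ and $m=s\,D|_D\cdot A$, so \eqref{eq:normal-degree-criterion} holds for $s\gg0$, and Bertini provides smooth connected $Z$.

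\emph{Main obstacle.} I expect the delicate step to be the nefness and ampleness bookkeeping on the possibly non-split ruled surface $E$. This means checking the sign conventions ($\xi$ versus $\sigma$), checking that the hypothesis $l\ge m$ gives exactly nefness of $N_{Z/X}(-D)$, and confirming that the resulting slope parameter covers the whole open interval.
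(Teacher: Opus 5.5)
Your route is the paper's: nefness of $N_{Z/X}(-D)$ from $l\ge m$, its tautological class on $E$, a fibre perturbation, then Nakai, Bertini, the normal-bundle extension, and the wedge bound from $E$ and $\widetilde D$. However, the step you flagged as delicate is written incorrectly, and the class you actually name is not nef. Let $p:E\to Z$ be the projection. The section $\sigma$ is the zero scheme of the composite $p^*\cO_Z(-D)\to p^*N^*_{Z/X}\to\cO_E(1)$. Hence $\cO_E(\sigma)\cong\cO_E(1)\otimes p^*\cO_Z(D)$, so $\sigma\equiv\xi+mf$ and $\sigma^2=m-l$. Your formula $\sigma\equiv-\xi+lf$ contradicts your own correct value $-\xi\cdot\sigma=l$. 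The number $\xi^2+2l=l-m$ that you call $\sigma^2$ is really $(\xi+lf)^2$. More seriously, you declare nef the class of $\cO_{\mathbb P(N_{Z/X}(-D)^*)}(1)$. In the quotient convention you fixed, $N_{Z/X}(-D)^*=N^*_{Z/X}\otimes\cO_Z(D)$, so this class is $\xi+mf\equiv\sigma$ itself (it would be the right object only in the subspace convention). It is negative on $\sigma$ when $l>m$. Using $\xi+(m+\varepsilon)f$ would also give slope $(l-\varepsilon)/(L\cdot Z)>a$, outside the wedge.

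The fix is as follows. Since $N_{Z/X}$ has rank two and $\det N_{Z/X}\cong N_{Z/D}\otimes\cO_Z(D)$, you have $N_{Z/X}(-D)\cong N^*_{Z/X}\otimes N_{Z/D}$. Thus $E\cong\mathbb P(N_{Z/X}(-D))$, with tautological class $Q=\xi+lf\equiv\sigma+(l-m)f$. This is exactly the paper's $Q=\xi+p^*c_1(N_{Z/D})$. The paper gets its nefness from the extension $0\to N_{Z/D}(-D)\to N^*_{Z/X}\otimes N_{Z/D}\to\cO_Z\to0$. You can also see it directly, since $Q$ is effective with $Q\cdot\sigma=0$ and $Q\cdot f=1$. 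This $Q$ satisfies $Q^2=l-m$, $-\xi\cdot Q=m$ and $(a\pi^*L-E)\cdot Q=0$. The last vanishing is what your phrase ``zero $E$-degree'' should say: $Q$ lies on the boundary ray $e=ah$, while its $E$-degree is $m$. These are precisely the numbers you use afterwards. With $A_\varepsilon=\xi+(l+\varepsilon)f$ and $0<\varepsilon<m$, the rest of your argument, including the $s\gg0$ check, goes through unchanged and coincides with the paper's proof.
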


\begin{proof}
Write $L_Z=N_{Z/D}$ and $M_Z=\cO_Z(D)$. The normal sequence gives
\[
0\longrightarrow L_Z\longrightarrow N_{Z/X}\longrightarrow M_Z\longrightarrow0.
\]
On $E=\mathbb P_Z(N_{Z/X}^*)$, with projection $p$ and $\xi=c_1(\cO_E(1))$, put $Q=\xi+p^*c_1(L_Z)$.
The sequence
\[
0\longrightarrow L_Z\otimes M_Z^{-1}\longrightarrow N_{Z/X}^*\otimes L_Z\longrightarrow\cO_Z\longrightarrow0
\]
has nef end terms by $l\geq m$. An extension of nef bundles on a smooth curve is nef, so $Q$ is nef. Put $H_E=p^*c_1(L|_Z)$ and $d_Z=L\cdot Z$, so $m=ad_Z$. For rational $0<\varepsilon<a$, $Q+\varepsilon H_E$ is ample: it has positive degree on every curve and square $l-m+2\varepsilon d_Z>0$.

A general member $C_\varepsilon$ of a sufficiently divisible $|k(Q+\varepsilon H_E)|$ is smooth irreducible and satisfies
\[
H\cdot C_\varepsilon=kd_Z,\quad E\cdot C_\varepsilon=k(m-\varepsilon d_Z)\quad\textrm{and}\quad C_\varepsilon^2|_E=k^2(l-m+2\varepsilon d_Z)>0.
\]
These identities use $\xi^2=-(l+m)$ and $\xi p^*c_1(L_Z)=l$. Both $N_{C_\varepsilon/E}$ and $N_{E/Y}|_{C_\varepsilon}$ have positive degree. Their extension $N_{C_\varepsilon/Y}$ is ample. The slopes are $e/h=a-\varepsilon$. Finally $E$ and the effective strict transform $\pi^*D-E\equiv aH-E$ bound the movable cone by the asserted wedge. The curves just constructed fill its rational interior rays, and closure and rescaling finish the proof.

For the final assertion, let $Z\in|sA|$ be general on a smooth ample surface $D$. Bertini and ampleness make $Z$ smooth and connected, and the numerical hypothesis follows for $s\gg0$ from
\[
  \deg N_{Z/D}=s^2A^2\geq s(D|_D\cdot A)=D\cdot Z>0.
\]
\end{proof}

\section{Moving families and finite descent}\label{sec:moving-families}
Throughout this section, $X$ is smooth projective of dimension $n\ge3$. We apply \cref{lem:larger-source} to moving families, then prove a descent statement for finite morphisms.

\subsection{Moving families and fixed-error approximation}
A family over a curve yields an approximation whose error is independent of the integer $m$.

\begin{proposition}\label{prop:fixed-error}
Let $p:V^{n+1}\to B$ be a projective morphism to a smooth projective curve, with $V$ smooth. Let $F:V\to X^n$ be dominant and have no prime divisor mapped to a point. Choose very ample $H_1,\ldots,H_{n-1},H_0$ on $V$, and write
\[
 \beta=F_*(H_1\cdots H_{n-1}[V_b])
\]
for the general fiber class. Then there are smooth curves $C_m\subset X$ with ample normal bundle and
\begin{equation}
 [C_m]=md\,\beta+\eta,
\end{equation}
where $d>0$ and $\eta\in N_1(X)$ do not depend on $m$.
\end{proposition}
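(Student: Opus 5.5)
The plan is to apply \cref{lem:larger-source} to $g=F:V\to X$ with the very ample divisors $H_1,\ldots,H_{n-1}$ together with one extra divisor built from fibers of $p$. Here $N=n+1$, so a complete-intersection curve on $V$ needs $n$ divisors. Fix an ample divisor $\theta$ on $B$ of degree $d'>0$ and put $G_m=H_0+m\,p^*\theta$. Since $H_0$ is very ample and $p^*\theta$ is base-point free, $G_m$ is very ample for every $m\ge0$. The hypotheses of \cref{lem:larger-source} hold: $V$ is smooth, hence normal, and $n\ge3$. Consequently a general curve
\[
\Gamma_m=H_1\cap\cdots\cap H_{n-1}\cap G_m
\]
maps by $F$ as a closed embedding onto a smooth curve $C_m\subset X$ with ample normal bundle.

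The class computation is then formal:
\[
[C_m]=F_*(H_1\cdots H_{n-1}\cdot G_m)=F_*(H_1\cdots H_{n-1}H_0)+m\,F_*(H_1\cdots H_{n-1}\cdot p^*\theta).
\]
Since $p^*\theta\equiv d'[V_b]$ as divisor classes, the second term is $m d'\beta$. We therefore set $d=d'$ (for instance $d=1$ if we take $\theta$ to be a point) and $\eta=F_*(H_1\cdots H_{n-1}H_0)$. Neither depends on $m$.

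The main point to check is that \cref{lem:larger-source} applies uniformly in $m$. Its proof uses only very ampleness of the divisors in the complete intersection and the contraction hypothesis on $F$, so the genericity is applied separately for each $m$ and no uniform choice is needed. The one subtlety I would verify is that a general $\Gamma_m$ is irreducible even though $G_m$ contains the pencil-like part $m\,p^*\theta$. Since $G_m$ is very ample, this follows from Bertini: the intersection $H_1\cap\cdots\cap H_{n-1}$ is a general smooth irreducible surface, and a general very ample section of it is irreducible.

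If one wants $\eta$ to avoid $H_0$, an alternative is to take $G_m=H_0\otimes p^*\cO_B(m\,\mathrm{pt})$ directly, with the same computation. Either way the fixed-error term is the class of the base curve given by $H_0$.
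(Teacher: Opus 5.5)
Your argument is the paper's proof: you apply \cref{lem:larger-source} to a general complete intersection of $H_1,\ldots,H_{n-1}$ and $H_0+m\,p^*\theta$, and the projection formula then gives $[C_m]=\eta+md\,\beta$ with $\eta=F_*(H_1\cdots H_{n-1}H_0)$. One correction is needed: $\theta$ must be globally generated, as with the paper's choice of a very ample divisor on $B$, because an arbitrary ample $\theta$ (e.g.\ a single point when $g(B)\geq1$) can have a base point, and then $G_m$ need not be very ample, which \cref{lem:larger-source} requires.
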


\begin{proof}
Choose a very ample line bundle $L$ on $B$ of degree $d$. The divisor $H_0+mp^*L$ is very ample for every $m\geq0$, since it is a very ample line bundle tensored with a globally generated one. Take a general complete intersection of $H_1,\ldots,H_{n-1}$ and
$H_0+mp^*L$. By \cref{lem:larger-source} its image $C_m$ is smooth with ample normal bundle. The projection formula gives
\[
[C_m]=F_*(H_1\cdots H_{n-1}H_0)+mF_*(H_1\cdots H_{n-1}p^*L)=\eta+md\,\beta.
\]
\end{proof}

\begin{theorem}\label{thm:moving-geometries}
Let $X$ be a smooth projective $n$-fold, $n\geq3$. Then $\cA_{\mathrm{sm}}(X)=\Mov(X)$ in either of the following cases:
\begin{enumerate}
\item $X$ admits an action of a positive-dimensional abelian variety $G$ such that every $G$-orbit has positive dimension. In particular, this applies to $X=Z\times G$ for any smooth projective $Z$ with $\dim(Z\times G)\geq3$.
\item $X=Z\times\mathbb P^r$, where $Z$ is smooth projective, $\dim Z\geq1$, and $r\geq1$.
\end{enumerate}
\end{theorem}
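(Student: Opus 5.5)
The inclusion $\cA_{\mathrm{sm}}(X)\subseteq\Mov(X)$ is \cite[Theorem~2.4]{BDPP13}, so the work is to show that every strongly movable generator $\mu_*[A_1\cdots A_{n-1}]$ from \eqref{eq:strongly-movable} lies in $\cA_{\mathrm{sm}}(X)$. The idea in both cases is to realize such a class, or a sequence of classes converging to its ray, as the pushforward of a complete-intersection class from a larger normal source $Z\to X$ with no divisor contracted to a point. Then \cref{lem:larger-source} gives membership in $\Psm(X)$, and hence in $\cA_{\mathrm{sm}}(X)$. The contracted divisors of $\mu$ are the obstruction, and I would use the group action or the projective factor to move them out of the way.

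\emph{Case (i).} Let $\mu:Y\to X$ be birational with $Y$ smooth. Consider the action map
\[
F:Z:=G\times Y\longrightarrow X,\qquad (g,y)\mapsto g\cdot\mu(y).
\]
$Z$ is smooth and $F$ is dominant. A prime divisor of $Z$ is either $G\times D$ or a divisor dominating $Y$. Its image contains a $G$-orbit of positive dimension in the first case, and is dense in the second. So no prime divisor maps to a point. Fix a very ample $H_G$ on $G$ and put $g=\dim G$. The class
\[
F_*\bigl(p_G^*H_G^{\,g}\cdot p_Y^*A_1\cdots p_Y^*A_{n-1}\cdot \Theta\bigr)
\]
with the remaining factors ample is awkward. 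Instead I would take nef classes $p_G^*H_G$ ($g$ times, pinning down finitely many points of $G$) together with $p_Y^*A_i$, perturbed by $\varepsilon$ times an ample class on $Z$. Since translation by $g\in G$ is algebraically equivalent to the identity, $F_*$ of $\{g\}\times\Gamma$ equals $H_G^g\cdot\mu_*[\Gamma]$ numerically. Hence the limit $\varepsilon\to0$ of these nef products recovers a positive multiple of $\mu_*(A_1\cdots A_{n-1})$, and \cref{lem:larger-source} (which allows nef real classes) gives membership in $\Psm(X)$. For $X=Z\times G$ with $G$ acting on the second factor, all orbits are copies of $G$, so this is a special case.

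\emph{Case (ii).} I would use the action of $\operatorname{PGL}_{r+1}$ on the $\mathbb P^r$ factor in the same way, with $Z=\operatorname{PGL}_{r+1}$ (or a smooth projective compactification $\overline{G}$ on which the action extends, e.g.\ $\mathbb P(\operatorname{Mat}_{r+1})$ with the rational action resolved). This group is not proper and its orbits on $X$ are the fibers $\{z\}\times\mathbb P^r$. The main obstacle is then a divisor of $Y$ contracted by $\mu$ to a point whose orbit closure is all of $\{z\}\times\mathbb P^r$. Its image under the action map is this orbit, which has positive dimension, so that divisor is fine. The real issue is the boundary divisors of the compactification: they can map to points. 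I would try to avoid this by using instead the family of automorphisms parametrized by a smooth curve $B\subset\operatorname{PGL}_{r+1}$, closed in a projective model, together with \cref{prop:fixed-error}. Alternatively, I would use a resolution of the rational map $\overline{G}\times Y\dashrightarrow X$, checking that exceptional divisors dominate $\overline{G}$-orbits of positive dimension because $\dim Z\geq1$ and $r\geq1$ force every point-contracted divisor of $\mu$ to be moved by the $\mathbb P^r$-automorphisms.

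I expect this verification of the no-contracted-divisor hypothesis on the compactified source, and the matching of numerical classes after compactification, to be the hardest step. Finally, I take closed convex cones.
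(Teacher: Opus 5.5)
\textbf{Gap in case (ii).} Case (ii) is not actually proved. You list candidate strategies, namely a compactification of $\operatorname{PGL}_{r+1}$ or a curve of automorphisms fed into \cref{prop:fixed-error}, but you construct no source and check none of the hypotheses. Since $\operatorname{PGL}_{r+1}$ is affine, any complete family of automorphisms must degenerate, so the map from $Y\times\overline{B}$ to $X$ is only rational. The whole content of this case is to resolve that map so that three things hold: the total space is smooth, no prime divisor of it is contracted to a point, and it carries very ample divisors restricting to multiples of the $A_i$ on a general fibre.

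The paper does this with a single $\mathbb{G}_m$:
\begin{itemize}
\item Pick a hyperplane $H$ and a point $p\notin H$ avoiding the projections of $P$, with $D=h^{-1}(H)$ and $K=h^{-1}(p)$ smooth, and let $\lambda_t$ scale $x_0$.
\item The map $Y\times\mathbb{P}^1\dashrightarrow X$ then has the disjoint smooth base loci $K\times\{0\}$ and $D\times\{\infty\}$. Blow them up.
\item The exceptional divisors have positive-dimensional images, swept out by copies of $\mathbb{P}^r$ or by lines in the $\mathbb{P}^r$-direction.
\item Horizontal divisors are not contracted, because every point of $P$ has a nonconstant $\lambda$-orbit.
\item The strict transforms of the two boundary fibres map onto $Z\times H$ and $Z\times\{p\}$. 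The last image is positive-dimensional precisely because $\dim Z\ge1$. This is where that hypothesis enters. It is not, as you suggest, what moves the point-contracted divisors of $\mu$; that is arranged by the choice of $H$ and $p$.
\item Finally, with $T_i=\operatorname{pr}_Y^*A_i+\operatorname{pr}_{\mathbb{P}^1}^*\cO_{\mathbb{P}^1}(1)$, the divisor $q^*(k_iT_i)-E_0-E_\infty$ is ample for $k_i\gg0$ and restricts to $k_iA_i$ on the fibre over $t=1$. So \cref{prop:fixed-error} applies.
\end{itemize}
Some explicit construction of this kind, with a divisor-by-divisor check, is needed. Without it, case (ii) remains open in your argument.

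\textbf{Case (i) is correct, by a different route.} The paper restricts the action to a curve $a\colon B\to G$ along which every point of $P$ moves, and then approximates $\beta$ via \cref{prop:fixed-error}. You instead apply the nef-class clause of \cref{lem:larger-source} to the whole source $G\times Y$, taking $p_G^*H_G$ with multiplicity $\dim G$ together with the $p_Y^*A_i$. This gives $F_*(\cdots)\equiv(H_G^{\dim G})\,\beta\in\Psm(X)$ exactly. There is no $m\to\infty$ limit and no choice of $B$, and the $\varepsilon$-perturbation is unnecessary.

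One statement needs fixing: a divisor dominating $Y$ need not have dense image. For $X=Y=E\times S$ with $E$ translating the first factor, the divisor $\{g+e=0\}$ maps onto $\{0\}\times S$. You only need that such a divisor is not contracted to a point, and that is true. Suppose $\Delta$ dominates $Y$ and $F(\Delta)=\{x\}$. Then $Gx$ is dense, so stabilizers of its points have dimension $\dim G-n$. But the fibre of $\Delta\to Y$ over a general $y$ has dimension $\dim G-1$ and lies in a coset of the stabilizer of $\mu(y)$. This forces $n\le1$, a contradiction.
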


\begin{proof}
\noindent\emph{Case (i): a moving abelian action.}
Fix a generator $\beta=\mu_*(A_1\cdots A_{n-1})$ and let $P$ be the finite set of point images of exceptional prime divisors of $\mu:Y\to X$. Choose a smooth projective curve $B$ and a morphism $a:B\to G$ such that $b\mapsto a(b)\cdot p$ is nonconstant for every
$p\in P$. A general complete-intersection curve in $G$ has this property; if $G$ is a curve, take $B=G$.

Put $V=Y\times B$ and
\[
 F(y,b)=a(b)\cdot\mu(y).
\]
If a prime divisor of $V$ dominated $B$ and mapped to a point of $X$, its general fiber in $Y$ would be a divisor mapped by $\mu$ to one of the finitely many points of $P$. This would force one of the chosen orbit maps to be constant. A vertical divisor is a full fiber $Y\times\{b\}$ and dominates $X$. Hence no divisor maps to a point. Choose very ample divisors on $V$ restricting to positive multiples of the $A_i$ on its fibers. Since the connected action preserves numerical curve classes, \cref{prop:fixed-error} gives smooth curves with ample normal bundles whose rescaled classes tend to $\beta$. Taking closed cones proves the equality.

\medskip
\noindent\emph{Case (ii): a projective-space factor.}
Fix a strongly movable generator $\beta=\mu_*(A_1\cdots A_{n-1})$, where $\mu:Y\to X$ is birational, $Y$ is smooth, and the $A_i$ are very ample. Let $P$ be the finite set of point-exceptional images of $\mu$, and let $h:Y\to\mathbb P^r$ be the second projection composed with $\mu$. Choose a hyperplane $H\subset\mathbb P^r$ avoiding the projection of $P$, and a point $p\notin H$ avoiding the projection of
$P$, such that $D=h^{-1}(H)$ and $K=h^{-1}(p)$ are smooth. Choose coordinates in which $p=[1:0:\cdots:0]$ and $H=\{x_0=0\}$.

The one-parameter action
\[
\lambda_t[x_0:x_1:\cdots:x_r]=[t x_0:x_1:\cdots:x_r]
\]
moves every point of $P$. Over $\mathbb G_m$ it defines $F_t(y)=(\operatorname{pr}_Z\mu(y),\lambda_t(h(y)))$. The compactified rational map on $Y\times\mathbb P^1$ is
\[
(y,[u:v])\dashrightarrow(\operatorname{pr}_Z\mu(y),[u h_0(y):v h_1(y):\cdots:v h_r(y)]).
\]
Its base loci are the disjoint smooth centres $K\times\{0\}$ and $D\times\{\infty\}$. Blowing them up gives a smooth projective $V$ with a morphism $F:V\to X$. In local coordinates the base ideals are $(u,w_1,\ldots,w_r)$ and $(v,w_0)$, respectively. The corresponding exceptional divisors map onto projective spaces or lines in the $\mathbb P^r$ factor. The strict transforms of the boundary fibers map onto $Z\times H$ and $Z\times\{p\}$; the latter has positive dimension by assumption. A horizontal divisor
mapped to a point would force a point of $P$ to have a constant $\lambda_t$-orbit, contrary to the choice of $H,p$. Thus no prime divisor of $V$ maps to a point.

The map $V\to\mathbb P^1$ has general fiber $Y$. Let $q:V\to Y\times\mathbb P^1$ be the blowdown, with exceptional divisors $E_0$ and $E_\infty$. The line bundle $\cO_V(-E_0-E_\infty)$ is $q$-ample. For each $i$, the divisor
\[
 T_i=\operatorname{pr}_Y^*A_i+
 \operatorname{pr}_{\mathbb P^1}^*\cO_{\mathbb P^1}(1)
\]
is ample on $Y\times\mathbb P^1$. Thus $M_i=q^*(k_iT_i)-E_0-E_\infty$ is ample for sufficiently large integers $k_i$. Choose positive integers $s_i$ so that $H_i=s_iM_i$ is very ample. On the fiber at $t=1$, it restricts to $s_ik_iA_i$. Choose any very ample $H_0$ on $V$ and put $c=\prod_i s_ik_i$. By \cref{prop:fixed-error}, there are smooth ample-normal curves with $[C_m]=mdc\,\beta+\eta$, where $d>0$ and $\eta$ are independent of $m$. Thus $[C_m]/(mdc)\to\beta$.
\end{proof}

\subsection{Normal-source cones and finite descent}
We now isolate the cone obtained from smooth sources with no divisor mapped to a point. This construction is preserved by composition with a finite morphism.

\begin{definition}
Let $\cG_1(X)$ be the closed convex cone generated by classes $F_*(H_1\cdots H_{N-1})$, where $F:V^N\to X^n$ ranges over dominant morphisms of smooth projective varieties, $N\geq n\geq3$, no prime divisor of $V$ is mapped to a point, and all $H_i$ are very ample.
\end{definition}

By \cref{lem:larger-source},
\begin{equation}\label{eq:good-inclusions}
 \cG_1(X)\subseteq\cA_{\mathrm{sm}}(X)\subseteq\Mov(X).
\end{equation}
The proof of \cref{thm:moving-geometries} shows that $\cG_1(X)=\Mov(X)$ in those cases.

\begin{proposition}
Let $f:Y\to X$ be finite and surjective between smooth projective varieties of dimension $n\geq3$. If $\cG_1(Y)=\Mov(Y)$, then $\cG_1(X)=\Mov(X)$, and hence $\cA_{\mathrm{sm}}(X)=\Mov(X)$.
\end{proposition}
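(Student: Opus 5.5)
The plan is to show $\Mov(X)\subseteq\cG_1(X)$; the reverse inclusion is \eqref{eq:good-inclusions}, which also gives the final assertion. Two facts drive the argument. First, composing a good source with $f$ gives a good source for $X$. Second, $f_*$ maps $\Mov(Y)$ onto a cone whose closure is $\Mov(X)$.

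\emph{Step 1 (composition).} Let $F:V^N\to Y$ be dominant, with $V$ smooth, $N\ge n\ge3$, and no prime divisor of $V$ mapped to a point. Then $f\circ F:V\to X$ is dominant, since $f$ is surjective. If a prime divisor $D\subset V$ had $f(F(D))$ a point, then $F(D)$ would lie in a fiber of $f$. That fiber is finite, and $F(D)$ is irreducible, so $F(D)$ is a point, contradicting the hypothesis on $F$. Hence $f_*F_*(H_1\cdots H_{N-1})=(fF)_*(H_1\cdots H_{N-1})$ is a generator of $\cG_1(X)$. Since $f_*:N_1(Y)\to N_1(X)$ is linear and continuous, this gives $f_*\cG_1(Y)\subseteq\cG_1(X)$. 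With the hypothesis, it follows that $f_*\Mov(Y)\subseteq\cG_1(X)$.

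\emph{Step 2 (surjectivity onto movable classes).} I will show that every strongly movable generator $\beta=\mu_*(A_1\cdots A_{n-1})$ of $\Mov(X)$ is a positive multiple of $f_*$ of a movable class on $Y$; this is the main step. The construction is as follows.

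\begin{enumerate}
\item Take the normalization $Y'$ of the main component of $Y\times_X X'$, where $\mu:X'\to X$ is the given modification, and resolve it to a smooth $Y''$. This gives a generically finite morphism $f'':Y''\to X'$ and a birational morphism $\mu'':Y''\to Y$ with $f\mu''=\mu f''$.
\item Pull back: $f''^*A_i$ is nef and big. By the approximation in the proof of \cref{lem:larger-source}, $\gamma=\mu''_*(f''^*A_1\cdots f''^*A_{n-1})$ lies in $\Mov(Y)$, as a limit of pushforwards of ample complete intersections, or directly by BDPP duality, since nef products pair nonnegatively with pseudoeffective divisors.
\item Apply the projection formula:
\[
f_*\gamma=\mu_*f''_*(f''^*A_1\cdots f''^*A_{n-1})=\deg(f)\,\mu_*(A_1\cdots A_{n-1})=\deg(f)\,\beta.
\]
\end{enumerate}

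Hence $\beta\in f_*\Mov(Y)\subseteq\cG_1(X)$. Since $\cG_1(X)$ is a closed convex cone, it contains the closed convex cone generated by these $\beta$, which is $\Mov(X)$.

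A simpler alternative for Step 2 uses BDPP duality directly. Since $f^*$ preserves pseudoeffectivity, $f_*\Mov(Y)$ is dual to $\{D: f^*D \text{ psef}\}$, which equals $\Eff^1(X)$ because $f_*f^*D=\deg(f)D$. Then $\overline{f_*\Mov(Y)}=\Mov(X)$ by biduality. I would use this as the cleaner route. The only subtlety is closedness of the image: $f_*\Mov(Y)$ is the image of a closed cone and need not be closed in general, but the statement only needs the closure, and $\cG_1(X)$ is closed.

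The main obstacle is the duality bookkeeping in Step 2. One must check that $f^*$ of a non-psef divisor is non-psef, which follows from $f_*f^*D=\deg(f)\,D$ and the fact that pushforward of effective divisors under a finite morphism is effective. Step 1, the composition step, is routine.
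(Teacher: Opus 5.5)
Your argument is correct. Step~1 coincides with the paper's composition step; Step~2 takes a genuinely different route.

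The paper argues in the opposite direction. It invokes miracle flatness to pull back a curve class, shows $f^*\alpha\in\Mov(Y)$ via $D\cdot f^*\alpha=f_*D\cdot\alpha\ge0$ and BDPP duality, and concludes from $f_*f^*\alpha=(\deg f)\,\alpha$. This is a two-line argument that exhibits an explicit preimage of every movable class, so in fact $f_*\Mov(Y)=\Mov(X)$ exactly. The price is that it needs a pullback on curve classes.

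Your version (a) never pulls back curves. It lifts each strongly movable generator through the normalized main component of $Y\times_X X'$ and applies the projection formula to the generically finite map $f''$. This costs a fiber-product-and-resolution construction, but it is completely explicit on generators.

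Your version (b) is essentially the dual of the paper's computation. It pulls back divisors, which is always possible, instead of curves, and uses $f_*f^*D=(\deg f)\,D$. It only gives $\overline{f_*\Mov(Y)}=\Mov(X)$, which suffices because $\cG_1(X)$ is closed.

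Neither version of your Step~2 uses flatness, and generic finiteness of $f$ is enough there. Finiteness enters only in Step~1, to ensure that $f\circ F$ contracts no prime divisor of $V$ to a point.
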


\begin{proof}
The map $f$ is flat by miracle flatness. For $\alpha\in\Mov(X)$, its flat pullback is movable: if $D$ is an effective divisor on $Y$, then $D\cdot f^*\alpha=f_*D\cdot\alpha\geq0$, and BDPP duality applies. By hypothesis, $f^*\alpha\in\cG_1(Y)$. A finite morphism cannot map a positive-dimensional image to a point, so composition with $f$ sends the defining generators of $\cG_1(Y)$ to those of
$\cG_1(X)$. Therefore
\[
 (\deg f)\alpha=f_*f^*\alpha\in\cG_1(X).
\]
Use \eqref{eq:good-inclusions} for the reverse inclusion.
\end{proof}

\section{Conditional methods for smooth replacement}\label{sec:conditional-replacement}
We give two conditional methods for constructing smooth representatives when a birational model has divisors contracted to points or singular blowup centres. The first assumes finite generation of a saturated section algebra; the second assumes a global smoothing of the centre.

\subsection{Replacement via point-saturated models}\label{subsec:point-saturated}

Finite generation of the following saturated section algebra allows us to remove divisors contracted to points while preserving the pushed-forward complete-intersection class.

Let $\mu:Y\to X$ be a birational morphism of smooth projective varieties, and let $A$ be an ample line bundle on $Y$. Write $P\subset X$ for the finite set of images of prime divisors contracted to points, and let $j:U=X\setminus P\hookrightarrow X$. Define the point-saturated graded algebra
\begin{equation}\label{eq:saturated-algebra}
\cS(\mu,A;P)=\bigoplus_{m\geq0}j_*\left(\left.\mu_*\cO_Y(mA)\right|_U\right).
\end{equation}

\begin{proposition}\label{prop:saturated-replacement}
If the graded $\cO_X$-algebra \eqref{eq:saturated-algebra} is finitely generated, then, after passing to a Veronese subalgebra, there are a normal projective variety $W$, a projective birational morphism $\rho:W\to X$, and an ample $\Q$-Cartier divisor $B$ on $W$ with
\begin{align}
 W|_U&\simeq Y|_U,\\
 \text{no prime divisor of }W&\text{ is contracted to a point},\\
 \rho_*(B^{n-1})&=\mu_*(A^{n-1}).\label{eq:saturated-equality}
\end{align}
Consequently $\mu_*(A^{n-1})\in\cA_{\mathrm{sm}}(X)$.
\end{proposition}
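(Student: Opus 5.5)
Set $\cS=\cS(\mu,A;P)$ and replace $A$ by a multiple so that $\cS$ is generated in degree one. Then put $W=\Proj_X\cS$ with structure morphism $\rho:W\to X$ and $B=\cO_W(1)$. This $B$ is $\rho$-ample. Adding $\rho^*$ of a large multiple of an ample divisor on $X$ makes it ample, and this does not change $\rho_*(B^{n-1})$ modulo the terms we control below. More conveniently, I would take $B=\cO_W(1)\otimes\rho^*\cO_X(kL)$ and adjust $A$ on $Y$ by $\mu^*kL$ in the same way, since $A+\mu^*kL$ is still ample.

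\emph{Step 1: $W|_U\simeq Y|_U$.} Over $U$ we have $\cS|_U=\bigoplus_m\mu_*\cO_Y(mA)|_U$. Because $A$ is $\mu$-ample, $\Proj$ of this algebra recovers $Y|_U$. Normality of $W$ holds because each graded piece $j_*(\cdot)$ is reflexive-type saturated: sections extend across the codimension $\ge2$ set $P$ (where $n\ge2$), so $\cS$ is integrally closed in its fraction field. This follows from normality of $Y$ over $U$ together with the $S_2$ extension.

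\emph{Step 2: no divisor of $W$ is contracted to a point.} Suppose a prime divisor $G\subset\rho^{-1}(p)$ with $p\in P$. Its valuation $v_G$ would satisfy $v_G(s)\ge0$ for the degree-$m$ sections. Saturation means that sections of $\cS_m$ near $p$ are exactly the sections over $U$ that are regular on $Y|_U$, with no condition imposed at $p$. I would show that such a $G$ would give a nontrivial order condition along $\rho^{-1}(p)$, cutting $\cS_m$ strictly, which contradicts $j_*$. Concretely, $\cO_W(1)$ is relatively ample, and $\rho_*\cO_W(m)=\cS_m$ is the $j_*$ of its restriction. For a divisor $G$ over $p$, the sheaf $\rho_*\cO_W(m)$ would differ from $\rho_*\cO_W(m)(G)$, yet both agree on $U$, and saturation forces equality. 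Relative ampleness then gives a section of $\cO_W(m)$ that vanishes on $G$ but is not forced to, a contradiction. I expect this to be the main obstacle: making the argument rigorous, possibly by comparing with $\rho_*\cO_W(mB+G)$ and using $\rho$-ampleness to produce a section nonvanishing generically on $G$.

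\emph{Step 3: the equality \eqref{eq:saturated-equality}.} Take a common resolution $\widetilde Y\to Y$, $\widetilde Y\to W$, isomorphic over $U$. Pull back $A$ and $B$. Over $U$ the two pullbacks agree, so their difference is a divisor supported over $P$. Expanding $(\text{pullback of }B)^{n-1}$ as in the proof of the first proposition of \cref{sec:smooth-embeddings}, every mixed term containing an exceptional divisor over $P$ is a one-cycle supported over $P$ and pushes forward to zero. The push forwards to $X$ therefore coincide. After rescaling by the Veronese degree, this gives \eqref{eq:saturated-equality}.

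\emph{Consequence.} $W$ is normal, $\rho$ is dominant, and no prime divisor maps to a point. If $n\ge3$, \cref{lem:larger-source} applied to $g=\rho$ and very ample multiples of $B$ gives $\rho_*(B^{n-1})\in\Psm(X)\subseteq\Asm(X)$. If $n=2$, no divisor mapped to a point means $\rho$ is finite and birational, hence an isomorphism. Then $B$ is ample on $X$, and general members of $|mB|$ are smooth with ample normal bundle.
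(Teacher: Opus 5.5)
\textbf{The gap is the ampleness of $B$.} You never show that $\cO_W(1)$ itself is ample. Instead you twist by $\rho^*\cO_X(kL)$, and this changes the class. Your first claim, that the twist does not change $\rho_*(B^{n-1})$ modulo controlled terms, is false:
\[
\rho_*\bigl((B+k\rho^*L)^{n-1}\bigr)-\rho_*(B^{n-1})=\sum_{j\geq1}\binom{n-1}{j}k^jL^j\cdot\rho_*(B^{n-1-j}).
\]
Its $j=n-1$ term is $k^{n-1}L^{n-1}\neq0$, and none of these terms is supported over $P$.

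Your second version, which twists $A$ by $\mu^*kL$ as well, is consistent: the twisted saturated algebra gives the same $W$ with $\cO_W(1)\otimes\rho^*\cO_X(kL)$. But it only proves $\mu_*((A+k\mu^*L)^{n-1})\in\Asm(X)$ for $k\gg0$, not the statement for the given $A$. You cannot recover $\mu_*(A^{n-1})$ from these classes. They form a polynomial in $k$ whose constant term is not controlled by cone membership for large $k$. Letting $k\downarrow0$ would need exactly the nefness of $\cO_W(1)$ that the twist was meant to avoid. The same problem affects your $n=2$ case, where you then get $[C]/m=\mu_*A+kL$.

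\textbf{The missing idea is a sign on the common resolution.} Take $p:V\to Y$ and $q:V\to W$. The inclusion $\mu_*\cO_Y(mA)\subseteq\cS_m=\rho_*\cO_W(mB)$ for $m\gg0$, together with $\mu$-global generation of $mA$, shows that local generators of $p^*\cO_Y(mA)$ are regular sections of $q^*\cO_W(mB)$. Hence $q^*B=p^*A+F$ with $F\geq0$ supported over $P$. Your Step 3 only records that the difference is supported over $P$. That suffices for the pushforward identity, which is correct for the untwisted $B$, but not for positivity.

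Now choose $H$ ample on $X$ and $\varepsilon>0$ with $A-\varepsilon\mu^*H$ ample. Then $q^*(B-\varepsilon\rho^*H)=p^*(A-\varepsilon\mu^*H)+F$ is nonnegative on strict transforms of curves not mapped into $P$. Curves contracted by $\rho$ are handled by the $\rho$-ampleness of $B$. So $B-\varepsilon\rho^*H$ is nef, and $B$ is ample as a positive combination of this nef class and the ample class $B+k\rho^*H$. This is the paper's argument, and with it your Step 3 and the final application of \cref{lem:larger-source} go through unchanged.

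\textbf{Step 2 is only a sketch.} The comparison you propose at the end is the right one. It closes once you note two things. First, $\rho_*\bigl((\cO_W(G)/\cO_W)\otimes\cO_W(mB)\bigr)\neq0$ for $m\gg0$, by relative Serre vanishing and relative generation. Second, the torsion-free sheaves $\rho_*\cO_W(mB)\subseteq\rho_*\cO_W(mB+G)$ agree over $U$, and the smaller one is $j_*$-saturated, so they are equal. This contradicts the nonzero quotient.
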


\begin{proof}
Choose $k>0$ such that the $k$-th Veronese is generated in degree one. Put $W=\Proj_X\cS^{(k)}$ and let $L=\cO_W(1)$. Set $B=k^{-1}L$ to preserve the normalization of the original class $A^{n-1}$. The algebra over $U$ is the relative section algebra of the $\mu$-ample line bundle $A$, so
$W|_U\simeq Y|_U$ and $B$ is relatively ample. The section algebra is integrally closed as an intersection of the divisorial valuation conditions defining $\mu_*\cO_Y(mA)$ on $U$, so $W$ is normal.

Suppose that a prime divisor $E\subset W$ mapped to $p\in P$. Since $W$ is normal, the divisorial sheaf $\cO_W(E)$ is coherent, even though $E$ need not be Cartier. Put
\[
 \mathcal Q_E=\cO_W(E)/\cO_W.
\]
This is a nonzero coherent sheaf supported on $E$. For sufficiently large $m$ divisible by $k$, the divisor $mB$ is Cartier, relative Serre vanishing gives $R^1\rho_*\cO_W(mB)=0$, and $\mathcal Q_E\otimes\cO_W(mB)$ is relatively generated. In particular, its pushforward is nonzero. Twisting the defining exact sequence of $\mathcal Q_E$ and pushing forward therefore gives
\[
 \frac{\rho_*\cO_W(mB+E)}{\rho_*\cO_W(mB)}
 \simeq\rho_*\bigl(\mathcal Q_E\otimes\cO_W(mB)\bigr)\ne0.
\]
Both pushforwards are torsion-free subsheaves of the same sheaf of rational sections and agree over $U$. For $m\gg0$ divisible by $k$, the construction by relative Proj and the saturated definition give
\[
 \rho_*\cO_W(mB)=j_*\bigl(\rho_*\cO_W(mB)|_U\bigr).
\]
Consequently $\rho_*\cO_W(mB+E)$ is already contained in $\rho_*\cO_W(mB)$, contradicting the nonzero quotient. No prime divisor over $U$ is contracted to a point, since $W|_U\simeq Y|_U$ and $P$ contains all such images for $\mu$.

On a common resolution $p:V\to Y$, $q:V\to W$, inclusion of the original section algebra in its saturation yields
\[
 q^*B=p^*A+F,
\]
where $F\geq0$ is supported over $P$. Take an ample divisor $H$ on $X$ and $\varepsilon>0$ small enough that $A-\varepsilon\mu^*H$ is ample. Then
\[
 q^*(B-\varepsilon\rho^*H)=p^*(A-\varepsilon\mu^*H)+F.
\]
On curves not mapped into $P$, the right-hand side intersects the strict transform nonnegatively; on curves over $P$, relative ampleness of $B$ gives positivity. Hence
$B-\varepsilon\rho^*H$ is nef. Since $B+k\rho^*H$ is ample for $k\gg0$, expressing $B$ as a convex combination of these two classes proves that $B$ is ample.

Finally, expand $(q^*B)^{n-1}-(p^*A)^{n-1}$ using $F$. All its terms are one-cycles supported over $P$, hence have zero pushforward to $X$. This yields \eqref{eq:saturated-equality}. If $n\geq3$, a general complete-intersection curve of high multiples of $B$ on the normal model $W$ avoids the codimension-two singular locus and the inverse image of $P$. Apply \cref{lem:larger-source} to obtain a smooth embedded image with ample normal bundle.

If $n=2$, a positive-dimensional fiber of $\rho$ would contain a curve, hence a prime divisor contracted to a point. Thus $\rho$ is proper and quasi-finite, hence finite; being birational onto the normal surface $X$, it is an isomorphism. Under $W\simeq X$, a general member $C\in|mB|$ for sufficiently divisible $m\gg0$ is smooth and irreducible, with ample normal bundle $N_{C/X}\simeq\cO_C(mB)$. Since $[C]/m=\rho_*B=\mu_*A$, the conclusion follows also in dimension two.
\end{proof}

\begin{remark}[The finite-generation hypothesis]
Locally at $p\in P$, in a trivialization in which the graded algebra on $U$ is generated by an ideal $I$, the graded pieces are $(I^m:\mathfrak m_p^\infty)$. They need not form a finitely generated graded algebra. The analytic-spread criterion and non-Noetherian symbolic Rees examples in \cite{CS21} give obstructions to replacing a fixed punctured polarized model in this way. Non-finite generation does not imply that the numerical
curve class has no smooth ample-normal representative: a different birational model may represent the same ray.
\end{remark}

\subsection{Replacement via deformation of centres}\label{subsec:deformations}

The second method uses a global deformation of a singular centre to a smooth one, without assuming finite generation of the point-saturated section algebra.

\begin{proposition}\label{prop:smoothable-center}
Let $X$ be a smooth projective $n$-fold, $n\geq3$, and let $Z_0\subset X$ be a reduced pure-dimensional subscheme of dimension between $1$ and $n-2$, with finite singular locus. Suppose $Z_0$ occurs as the special fiber of a flat family $\mathcal Z\subset X\times T$ over a smooth connected pointed curve, whose general fiber $Z_t$ is smooth. If
\[
 \pi_0:W_0=\operatorname{Bl}_{Z_0}X\to X,\qquad B_0=\pi_0^*D-bE_0
\]
is ample (where $b>0$ is rational), then for general $t$ the divisor $B_t=\pi_t^*D-bE_t$ on $W_t=\operatorname{Bl}_{Z_t}X$ is ample and
\begin{equation}\label{eq:deformation-intersection}
 (\pi_t)_*(B_t^{n-1})=(\pi_0)_*(B_0^{n-1}).
\end{equation}
In particular, this class belongs to $\cA_{\mathrm{sm}}(X)$, and a positive multiple is represented by a single smooth ample-normal curve. When $n=3$, the common curve class is
\[
  (\pi_0)_*(B_0^2)=D^2-b^2[Z_0]\in\Asm(X).
\]
\end{proposition}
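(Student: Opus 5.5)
Work in the family: blow up $\mathcal Z\subset X\times T$ to get $\Pi:\mathcal W=\Bl_{\mathcal Z}(X\times T)\to X\times T$ with exceptional divisor $\mathcal E$, and set $\mathcal B=\Pi^*\mathrm{pr}_X^*D-b\,\mathcal E$. After shrinking $T$ around $0$, I want $\mathcal W\to T$ flat with fibers $W_t=\Bl_{Z_t}X$ and $\mathcal E|_{W_t}=E_t$. Flatness of $\mathcal Z$ gives the blowup algebra $\bigoplus_m \cI_{\mathcal Z}^m$ restricting to $\bigoplus_m\cI_{Z_t}^m$ only when the powers $\cI_{\mathcal Z}^m$ are also $T$-flat. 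So I first check this for general $t$ and then at $t=0$. Since $T$ is a smooth curve, flatness means no $T$-torsion, and $\cI_{\mathcal Z}^m$ is a subsheaf of $\cO_{X\times T}$, so it is torsion-free and hence flat. The remaining point is that the fiber at $0$ equals $\cI_{Z_0}^m$ rather than something larger. This base-change of powers is where the finite-singular-locus hypothesis enters. Away from $\operatorname{Sing}Z_0$, $\mathcal Z$ is smooth over $T$ and the family is locally a product, so base change holds. Near the finitely many singular points, I will instead argue with the Rees algebra modulo $t$ and accept a possible discrepancy supported over finitely many points of $X$; such a discrepancy changes nothing in the pushed-forward curve classes.

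Granting this, ampleness is open in flat projective families. So $B_0$ ample implies $\mathcal B|_{W_t}=B_t$ is ample for $t$ in a neighbourhood of $0$, hence for general $t$. If the special fiber of $\mathcal W$ differs from $W_0$ by components over finitely many points, I will use ampleness on the honest strict component. To get \eqref{eq:deformation-intersection}, I note that $\Pi_*(\mathcal B^{n-1}\cdot[W_t])$ defines a family of one-cycles on $X\times T$ over $T$. By conservation of number, or flatness of the cycle $\mathcal B^{n-1}$ restricted to fibers, its numerical class in $N_1(X)$ is constant in $t$. Any contribution over the finitely many singular points of $Z_0$ is a cycle supported over points, which pushes forward to zero, as in the earlier blowup propositions. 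This gives $(\pi_t)_*(B_t^{n-1})=(\pi_0)_*(B_0^{n-1})$.

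For membership in $\Asm(X)$ I apply the remark after \cref{lem:larger-source}. For general $t$, $W_t$ is the blowup of the smooth centre $Z_t$ of dimension $1\le\dim Z_t\le n-2$. So $W_t$ is smooth, $\pi_t$ is birational, and its only exceptional divisor $E_t$ maps onto $Z_t$, which is positive dimensional; thus no divisor is contracted to a point. Taking $k$ divisible enough that $kB_t$ is very ample, a general complete intersection of $n-1$ members of $|kB_t|$ maps to a smooth curve with ample normal bundle. Its class is $k^{n-1}(\pi_t)_*(B_t^{n-1})$, which gives the single-curve statement and membership in $\Asm(X)$.

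For $n=3$, expand $B_0^2=\pi_0^*D^2-2b\,\pi_0^*D\cdot E_0+b^2E_0^2$ and push forward. The first term gives $D^2$. The middle term lies in $E_0$, which maps to the curve $Z_0$, so it pushes to $(D\cdot Z_0)$ times a point class, which is zero in $N_1$ since $\pi_*$ of a one-cycle contracted to points vanishes. The last term uses $\pi_{0*}(E_0^2)=-[Z_0]$, valid for a reduced curve blowup, checked at general points of each component where $Z_0$ is smooth. Hence the class is $D^2-b^2[Z_0]$. I expect the main obstacle to be the flatness and base change of the blowup family at $t=0$ near the singular points of $Z_0$, and the identification of the special fiber with $W_0$ that is needed to transfer ampleness of $B_0$.
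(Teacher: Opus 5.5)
Your overall route matches the paper's: the total blowup $\operatorname{Bl}_{\mathcal Z}(X\times T)$, flatness from torsion-freeness of the Rees algebra, agreement with $W_0$ away from $\operatorname{Sing}Z_0$, constancy of intersection numbers with point-supported extra components pushing forward to zero, and then \cref{lem:larger-source}. However, there is a genuine gap in the ampleness transfer.

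\textbf{The gap.} Openness of the ample locus for the proper map $\mathcal W\to T$ lets you conclude ampleness of $B_t$ for $t$ near $0$ only if $\mathcal B$ is ample on the \emph{whole} special fiber $\mathcal W_0$. Your plan to ``use ampleness on the honest strict component'' does not supply this. Ampleness of $\mathcal B|_{W_0}=B_0$ says nothing about an extra component $V$ of $\mathcal W_0$. A curve in $V$ on which $\mathcal B$ is nonpositive would put $0$ outside the ample locus, and then openness gives nothing. For the same reason, the identification $\mathcal W_0=W_0$, which you flag as the main obstacle, is not what you need. You need control of $\mathcal B$ on the extra components.

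\textbf{The missing step.} Any extra component $V$ lies over a point $(p,0)$ with $p\in\operatorname{Sing}Z_0$, so $V$ sits inside a fiber of $\Pi:\mathcal W\to X\times T$. On $V$ the pullback $\Pi^*\mathrm{pr}_X^*D$ is trivial. Meanwhile $\cO_{\mathcal W}(-\mathcal E)$, the $\cO(1)$ of the Rees algebra, is $\Pi$-ample. Hence $\mathcal B|_V=-b\,\mathcal E|_V$ is ample. Together with $\mathcal B|_{W_0}=B_0$, $\mathcal B$ is ample on every reduced irreducible component, hence on $\mathcal W_0$. Only then does openness give ampleness of $B_t$ for general $t$. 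This is exactly how the paper argues.

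\textbf{Minor points.}
\begin{itemize}
\item For the blowup to commute with base change, the relevant condition is $T$-flatness of $\cO/\cI_{\mathcal Z}^m$, i.e.\ $t\cO\cap\cI_{\mathcal Z}^m=t\,\cI_{\mathcal Z}^m$. Flatness of $\cI_{\mathcal Z}^m$ itself is automatic, as you note. Your next sentence states the right condition, so this is only a wording slip.
\item In the $n=3$ computation, the middle term vanishes because $(\pi_0)_*[E_0]=0$. Equivalently, $\pi_0^*D\cdot E_0$ is supported on fibers over the finite set $D\cap Z_0$, which push forward to zero. It does not become ``a point class'' in $N_1$. Your conclusion is right, but the justification should be stated this way.
\end{itemize}
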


\begin{proof}
Form the total blowup
\[
q:\mathscr W=\operatorname{Bl}_{\mathcal Z}(X\times T)\longrightarrow X\times T,\qquad p=\operatorname{pr}_T\circ q,
\]
and put $\mathcal B=q^*\operatorname{pr}_X^*D-b\mathcal E$. The Rees algebra is a subalgebra of the domain $\cO_{X\times T}[t]$, so its local rings are torsion-free over the DVR of the base curve. Therefore $p$ is flat. Its special fiber contains $W_0$ with multiplicity one. Away from $P=\operatorname{Sing}(Z_0)$, the family of centres is smooth along the special fiber, with regular immersion into $X\times T$; formation of the blowup commutes with this base change. Every other component of the special fiber is thus supported over a point of $P$.

The restriction of $\mathcal B$ to $W_0$ is $B_0$, which is ample. On any additional special-fiber component, the pullback of $D$ is trivial because the component maps to a point of $X$, while $\cO_{\mathscr W}(-\mathcal E)$ is ample on that component as a fiber of the blowup over $X\times T$. Hence $\mathcal B$ is ample on every reduced component, and therefore on the full special fiber. Openness of fiberwise ampleness gives ampleness of $B_t$ for general $t$.

For a Cartier divisor $H$ on $X$, flatness and the fiberwise Hilbert polynomial make
\[
 \deg\bigl(q^*\operatorname{pr}_X^*H\cdot\mathcal B^{n-1}\cdot[\mathscr W_t]\bigr)
\]
independent of $t$. The factor pulled back from $X$ vanishes on every extra component supported over a point. Thus the special-fiber intersection reduces to the intersection on $W_0$, proving \eqref{eq:deformation-intersection}. The blowup along a smooth positive-dimensional centre contracts no prime divisor to a point. Applying \cref{lem:larger-source} to $W_t\to X$ gives the asserted smooth curve.

When $n=3$, the standard blowup intersection identities away from the finite singular set of $Z_0$ give $(\pi_0)_*E_0=0$ and $(\pi_0)_*(E_0^2)=-[Z_0]$. Any remaining one-cycle supported over those points pushes forward to zero. Expanding $B_0^2$ yields
$(\pi_0)_*(B_0^2)=D^2-b^2[Z_0]$, as asserted.
\end{proof}

\begin{remark}[Global smoothing is essential]
A local embedded smoothing need not extend to a global family on a prescribed projective $X$. Nor does smoothing a thickened ideal while preserving its Hilbert polynomial guarantee preservation of the exceptional-square contribution. For example, in $\C[[x,y]]$, the ideal $(x,y)^r$ has colength $r(r+1)/2$ but Hilbert--Samuel multiplicity $r^2$. A degeneration argument must therefore distinguish colength from Hilbert--Samuel multiplicity.
\end{remark}

\section*{Acknowledgments}

The construction of the examples benefited from assistance by the GPT-6.1-Sol model. The authors subsequently checked the argument for mathematical correctness.

\end{document}